\documentclass[a4paper,12pt,reqno]{amsart}
\usepackage{comment}
\usepackage[utf8]{inputenc}
\usepackage{latexsym}
\usepackage{amssymb}
\usepackage{amsmath}
\usepackage{amsfonts}
\usepackage[T1]{fontenc}
\usepackage{enumerate}
\usepackage{xcolor}
\usepackage{url}
\usepackage{mathrsfs}
\usepackage{ulem}
\newtheorem{theorem}{Theorem}[section]
\newtheorem{lemma}[theorem]{Lemma}

\newtheorem{corollary}[theorem]{Corollary}
\newtheorem{example}[theorem]{Example}

\newtheorem{problem}[theorem]{Problem}

\def\fm{\mathrm{FM}}

\title[]{
On the Fischer-Musz\'ely equation for the positive cones of $C^*$-algebras
}

\author[D.~Hirota]{
Daisuke Hirota
}
\address{
National Institute of Technology, Tsuruoka College, Yamagata 997-8511, Japan
}
\email{dhirota@tsuruoka-nct.ac.jp
}

\author[J.~Oppekepenguin]{
Jyamira Oppekepenguin
}
\thanks{Corresponding author: Jyamira Oppekepenguin,\\ 
Email: 
oppekepnguin@gmail.com
}
\address{
IKAKEN+HIBIKEN, Niigata, Japan
}
\email{oppekepenguin@gmail.com}

\keywords{Jordan isomorphisms, positive cones, 
norm-additive maps, the Cauchy equation, the Fischer-Musz\'ely functional equation
}
\subjclass[2020]{
46L05,47A10,47A30,47B49,47B65
}

\begin{document}

\begin{abstract}
We study the Fischer-Musz\'ely functional equation on the positive semidefinite and positive definite cones of unital $C^*$-algebras. We prove that every bijective map between the positive semidefinite cones satisfying the Fischer-Musz\'ely equation is of the form $T(\cdot)=cJ(\cdot)c$, where $J$ is a Jordan 
$*$-isomorphism and $c$ is a positive invertible element. As a consequence, we obtain an analogous characterization for bijective maps between the positive definite cones of unital $C^*$-algebras.
\end{abstract}

\maketitle

\section{Introduction}\label{s1}
Fischer and Musz\'ely \cite{fm} initiated a study on 
a norm-type functional equation now known as the Fischer-Musz\'ely functional equation.  
A map $T\colon Q\to X$ from a semigroup $(Q,+)$ into a Banach space $X$ is called a $\fm$-map if it satisfies the Fischer-Musz\'ely equation (hereafter, the  $\fm$-equation) 
\[
\|T(a+b)\|=\|T(a)+T(b)\|
\]
for every pair $a,b\in Q$. 
The main problem is to determine whether the map is additive, or more generally, to characterize its form. 
 A detailed and insightful account of the developments
related to this equation is provided by Ger \cite{ger}.

Positive cones constitute one of the fundamental geometric and order-theoretic structures associated with $C^*$-algebras. 
Various geometric and order-theoretic structures on positive cones have been extensively studied, and many important classes of transformations are characterized by preserving such structures including in \cite{molnarbook,hamo,mol17,mol19,chaba20,mol20,dongli,hao,moi}. 
 
According to Hirota  \cite{piro}, the following problem was proposed by  Moln\'ar.
\begin{problem}[Moln\'ar]\label{molnar}
   Suppose that $T$ is a surjective $\fm$-map between the positive semidefinite cones of two $C^*$-algebras. 
   Does it follow that $T$ is additive?
\end{problem}
$\fm$-maps between positive cones (or positive semidefinite cones) of $C^*$-algebras are of particular interest because of its close connections with isometry theory and the characterization of Jordan $*$-isomorphisms on the underlying $C^*$-algebras. 
A major difficulty is that the map is defined only on the positive cone, rather than on the whole $C^*$-algebra. 
Note that a surjective $\fm$-map between entire $C^*$-algebras is additive by a theorem of Tabor \cite[Corollary 1]{t}.
Although $\fm$-maps do not arise explicitly in quantum information theory, the present problem shares the general philosophy of characterizing transformations by geometric or order-theoretic structures rather than by algebraic assumptions.
Affirmative answers to Problem \ref{molnar} were obtained in \cite{piro,shmi} for commutative $C^*$-algebras. The purpose of the present paper is to solve Problem \ref{molnar} for bijections on arbitrary unital C*-algebras. Our proof relies on a reduction to Kadison's theorem \cite[Theorem 2 and Corollary 5]{kadison} and provides a new approach to the characterization of $\fm$-maps on positive cones.

A similar question can also be posed for ordered Banach spaces.
On the other hand, a simple counterexample shows that an $\fm$-map between the positive cones of certain $L^1$ spaces need not be additive (see \cite[Example 4.1]{ztw}).

\section{Main results}\label{s2}
We will start by clarifying the notation and introducing the necessary definitions and properties that we will use throughout the paper. 
In this paper, we use $A$ and $B$ to denote unital $C^*$-algebras. We always write the unit in a $C^*$-algebra by $e$. 
We denote 
\[
A_{SA}=\{a\in A\colon a=a^*\},
\]
the Jordan algebra of all self-adjoint elements in $A$. 
The positive semidefinite cone is
\[
A_+=\{a\in A_{SA}\colon 0\le a\},
\]
and the positive definite cone is
\[
A_{++}=\{a\in A_+\colon \text{$a$ is invertible in $A$}\}.
\]
We denote $0<a$ for an element $a$ in $A_{++}$, for simplicity.

The primary result of this paper is the following.
\begin{theorem}\label{psdc}
    Suppose that $T\colon A_{+}\to B_{+}$ is a bijection.
    Then $T$ satisfies the $\fm$-equation 
    \[
    \|T(a+b)\|=\|T(a)+T(b)\|
    \]
    for every pair $a,b\in A_{+}$ if and only if 
    there is a Jordan $*$-isomorphism $J\colon A\to B$ such that 
    $T(a)=T(e)^\frac12 J(a) T(e)^\frac12$ for every $a\in A_{+}$.
\end{theorem}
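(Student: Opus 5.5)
The ``if'' direction is a direct check. A Jordan $*$-isomorphism $J$ is additive, maps $A_+$ onto $B_+$, and is isometric on $A_{SA}$. Hence $a\mapsto T(e)^{\frac12}J(a)T(e)^{\frac12}$ is additive, and the two sides of the $\fm$-equation agree identically. Bijectivity onto $B_+$ requires $T(e)$ to be invertible, so that conjugation by $T(e)^{\frac12}$ is a bijection of $B_+$. The substance is the ``only if'' direction, which I would organize in four steps.

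\textbf{Step 1: additivity.} First, taking $a=b=0$ gives $\|T(0)\|=\|2T(0)\|$, so $T(0)=0$. Next I would show that $T$ is additive. The tool is order-theoretic: for $x,y\in B_+$ one has $x\le y$ if and only if $\|y+tz\|$ controls suitable comparisons. More usefully, for $p,q\in A_+$ the FM-equation forces
\[
\|T(a+b)+T(c)\|=\|T(a+b+c)\|=\|T(a)+T(b+c)\|,
\]
and iterating this gives $\|T(\sum a_i)\|=\|\sum T(a_i)\|$ for all finite sums. From this I would extract that $T(a+b)$ and $T(a)+T(b)$ have the same ``norming functionals'' in a strong sense. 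Concretely, the plan is to show the following: for $x\in B_+$,
\[
\|x+w\|=\|y+w\| \text{ for all } w\in B_+ \implies x=y.
\]
This holds in a $C^*$-algebra, by testing with $w=\lambda(e-\text{spectral pieces})$ and large multiples; equivalently, $\sup$ over states of $\varphi(x)$ restricted to faces is determined. Surjectivity of $T$ lets every $w\in B_+$ be written as $T(c)$. Then $\|T(a+b)+T(c)\|=\|T(a)+T(b)+T(c)\|$ holds for all $c$. For this I need the multi-term identity $\|T(a+b)+T(c)\|=\|T(a+b+c)\|=\|T(a)+T(b)+T(c)\|$, and the second equality needs the three-term version. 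I would prove that by induction using bijectivity: write $T(a)+T(b)=T(d)$ if it lies in the range, which it does since $T$ is onto $B_+$. Then $\|T(d)+T(c)\|=\|T(d+c)\|$, and comparing with $c=0$ identifies $d$ through the separating property above. So the key lemma is the \emph{separating property} of the norm on $B_+$, and I expect that to be the main obstacle. I would prove it by taking $w=\lambda e-x$ for $x\le\lambda e$, which yields $\|\lambda e+(y-x)\|=\lambda$, hence $y\le x$; the symmetric argument gives equality, with some care needed since $w$ must be positive.

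\textbf{Step 2: extension and positivity in both directions.} Additivity together with $T(A_+)=B_+$ makes $T$ monotone with a monotone inverse. Monotone additive maps on cones are $\mathbb{R}_+$-homogeneous: they are $\mathbb{Q}_+$-homogeneous by additivity, and monotonicity gives the rest. So $T$ extends to a real-linear bijection $\tilde T\colon A_{SA}\to B_{SA}$ with $\tilde T(a)=T(a_+)-T(a_-)$, an order isomorphism.

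\textbf{Step 3: normalization.} Since $e$ is an order unit, $T(e)$ is an order unit in $B_+$, so $c=T(e)^{\frac12}$ is invertible. Define $S(a)=c^{-1}\tilde T(a)c^{-1}$. This is a unital order isomorphism $A_{SA}\to B_{SA}$, hence an isometry of the self-adjoint parts.

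\textbf{Step 4: Kadison's theorem.} By \cite[Theorem 2 and Corollary 5]{kadison}, a unital order isomorphism between self-adjoint parts of unital $C^*$-algebras is the restriction of a Jordan $*$-isomorphism $J$. Complexify to get $J\colon A\to B$. Then $T(a)=cJ(a)c$ with $c=T(e)^{\frac12}$, as claimed.
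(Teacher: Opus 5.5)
Your Steps 2--4 are fine \emph{once additivity is known}. Additivity plus bijectivity makes $T$ an order isomorphism, its real-linear extension is an order isomorphism $A_{SA}\to B_{SA}$, $T(e)$ is then invertible, and Kadison's Corollary~5 finishes. Your separating property is also correct; it is Lemma~2.6 of \cite{dongli}, which the paper uses throughout.

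The gap is Step~1, and it is the whole problem. The $\fm$-equation only moves one summand at a time. It gives $\|T(a+b+c)\|=\|T(a+b)+T(c)\|=\|T(a)+T(b+c)\|$, but never $\|T(a)+T(b)+T(c)\|$, so ``iterating'' does not yield the finite-sum identity. Moreover, the three-term identity is not a stepping stone to additivity but a restatement of it. If $T(d)=T(a)+T(b)$, then $\|T(a)+T(b)+T(c)\|=\|T(d)+T(c)\|$. So the three-term identity for all $c$ says exactly that $\|T(a+b)+w\|=\|T(d)+w\|$ for all $w\in B_+$, i.e.\ (by your separating property) that $T(a+b)=T(a)+T(b)$. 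Your induction therefore assumes what it is meant to prove, and ``comparing with $c=0$'' only gives $\|T(d)\|=\|T(a+b)\|$.

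The only test vectors $w$ you can actually evaluate are multiples of $e$, where $\|x+\lambda e\|=\|x\|+\lambda$ on $B_+$. This is how the paper gets $T(\alpha c_0)=\alpha e$ and $T(a+\lambda c_0)=T(a)+\lambda e$ (Lemmata~\ref{4} and~\ref{5}), but it does not give additivity in general. There are two warning signs. First, additivity plus injectivity immediately makes $T^{-1}$ order-preserving, and the paper states explicitly that its authors could not find a direct proof of that. Second, your Step~1 uses injectivity at most cosmetically. If it worked, it would show that every surjective $\fm$-map $A_+\to B_+$ is additive, which is more than the paper establishes (compare the extra hypothesis in Theorem~\ref{ontopsdc}).

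The paper never proves additivity directly; it comes out only at the end. It first uses injectivity to show two things:
\begin{itemize}
\item any $c_0\in T^{-1}(\{e\})$ is invertible (Lemmata~\ref{p23} and~\ref{p24}: if $T(a)+T(b)$ is a multiple of $e$, so is $T(a+b)$; apply this with $T(b)=\|T(e)\|e-T(e)$ to get $e\le\|T(e)\|c_0$);
\item $T^{-1}(B_{++})=A_{++}$.
\end{itemize}
It then normalizes to $T_0(x)=T(c_0^{1/2}xc_0^{1/2})$ with $T_0(e)=e$ and proves $\|T_0(a)\|=\|a\|$. This uses $\|T_0(a-b)\|=\|T_0(a)-T_0(b)\|$ for $b\le a$, together with the non-invertibility of $T_0(\|a\|e-a)$. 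The result is $\|T_0(a)+T_0(b)\|=\|a+b\|$, and Theorem~2.5 of \cite{dongli} (which rests on Kadison's theorem) yields a Jordan $*$-isomorphism $J$ with $T_0=J$ on $A_+$. Some such norm-preservation step is what your plan is missing; the separating property alone cannot bridge the gap between the two-term $\fm$-equation and additivity.
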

Hirota \cite[Theorem 1.1]{piro} proved that the surjective $\fm$-map between commutative $C^*$ algebras without assuming the unit is additive and positive homogeneous. Applying a theorem of Hirota, Shibata and Miura \cite{shmi} give a complete description of the form of a bijective $\fm$-map between commutative $C^*$-algebras. 

As a corollary of Theorem \ref{psdc} we have the following. 
\begin{corollary}\label{mean}
Suppose that $T\colon A_+\to B_+$ is a bijection. 
Then the following are equivalent.
\begin{itemize}
    \item[(1)] $T$ satisfies the Cauchy equation, i.e., $T(a+b)=T(a)+T(b)$ for every pair $a,b\in A_+$,
    \item[(2)] $T$ satisfies the Jensen equation, i.e., 
    $T\left(\frac{a+b}{2}\right)=\frac{T(a)+T(b)}{2}$ for every pair $a,b\in A_+$,
    \item[(3)]$T$ satisfies the $\fm$-equation, i.e., $\|T(a+b)\|=\|T(a)+T(b)\|$ for every pair $a,b\in A_+$, 
    \item[(4)] $T$ satisfies the equation $\left\|T\left(\frac{a+b}{2}\right)\right\|=\left\|\frac{T(a)+T(b)}{2}\right\|$
for every pair $a,b\in A_+$, 
\item[(5)] there is a Jordan $*$-isomorphism $J\colon A\to B$ such that 
    $T(a)=T(e)^\frac12 J(a)T(e)^\frac12$ 
    for every $a\in A_{+}$.
\end{itemize}
\end{corollary}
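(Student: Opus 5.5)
We prove Corollary \ref{mean} by routing every condition through Theorem \ref{psdc}, which gives (3)$\Leftrightarrow$(5). The implications (1)$\Rightarrow$(3) and (2)$\Rightarrow$(4) are trivial, since equal elements have equal norms. For (5)$\Rightarrow$(1), note that $J$ is additive and real-linear, and that $x\mapsto T(e)^{\frac12}xT(e)^{\frac12}$ is linear, so $T$ is additive on $A_+$. Thus the cycle will close once we show (1)$\Rightarrow$(2), (4)$\Rightarrow$(3), and (2)$\Rightarrow$(1) or (2)$\Rightarrow$(4).

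For (1)$\Rightarrow$(2), additivity gives $T(a)=T(a/2)+T(a/2)=2T(a/2)$. Hence
\[
T\left(\tfrac{a+b}{2}\right)=T\left(\tfrac a2\right)+T\left(\tfrac b2\right)=\tfrac{T(a)+T(b)}{2}.
\]
Alternatively, we can go through (5), which already yields full linearity of $T$ on the cone.

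The substantive step is (4)$\Rightarrow$(3). Our plan is to rescale $T$ and reduce to Theorem \ref{psdc}. Define $S\colon A_+\to B_+$ by $S(a)=2T(a/2)$. Then $S$ is a bijection, since $a\mapsto a/2$ is a bijection of $A_+$ and multiplication by $2$ is a bijection of $B_+$. Applying (4) to $a,b\in A_+$ gives
\[
\|S(a+b)\|=2\left\|T\left(\tfrac{a+b}{2}\right)\right\|=\|T(a)+T(b)\|.
\]
This is not yet the $\fm$-equation for $S$, because the right-hand side involves $T$ rather than $S$. To fix this, we will first apply (4) with $b=a$, which gives $\|T(a)\|=\|T(a)\|$ and is useless. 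Instead we will apply (4) with $b=0$, which gives $\|T(a/2)\|=\|(T(a)+T(0))/2\|$.

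We therefore need $T(0)=0$. To get it, we will argue as follows. Since $T$ is a bijection, choose $a_0$ with $T(a_0)=0$. Applying (4) to the pair $a_0,a_0$ gives nothing new. Applying it to the pair $(a_0,c)$ where $T(c)=T(0)$ is likewise uninformative. If this approach stalls, we will fall back on the proof strategy of Theorem \ref{psdc} itself, namely the reduction to Kadison's theorem. That strategy presumably uses only norm information on sums and should adapt verbatim to midpoint sums. This is the step I expect to be the main obstacle.

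Assuming $T(0)=0$, condition (4) with $b=0$ gives $\|T(a/2)\|=\|T(a)\|/2$, which is norm-homogeneity at the factor $\tfrac12$. Then $\|S(a+b)\|=\|T(a)+T(b)\|$, and we need $\|T(a)+T(b)\|=\|S(a)+S(b)\|$. We will obtain this by applying the same argument to the bijection $S$, iterating the rescaling. Concretely, we will show that $T$ satisfies (4) at all dyadic scales and conclude $\|T(a+b)\|=\|T(a)+T(b)\|$.

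Finally, for (2)$\Rightarrow$(1) we will argue directly. Surjectivity gives some $a_0$ with $T(a_0)=0$, and the Jensen equation makes $T-T(0)$ behave like an additive map. We will then use bijectivity onto the cone $B_+$, whose only element $x$ with $-x\in B_+$ is $0$, to force $T(0)=0$. The Jensen equation with $b=0$ then gives $T(a/2)=T(a)/2$, and hence additivity, since
\[
T(a+b)=2T\left(\tfrac{a+b}{2}\right)=T(a)+T(b).
\]
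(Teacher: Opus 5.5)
Your reductions are fine and agree with the paper: (1)$\Rightarrow$(2)$\Rightarrow$(4), (1)$\Rightarrow$(3), (5)$\Rightarrow$(1), and (3)$\Leftrightarrow$(5) via Theorem \ref{psdc}. The one substantive implication, (4)$\Rightarrow$(3), has a genuine gap: you never establish $T(0)=0$. Your fallback of rerunning the proof of Theorem \ref{psdc} ``verbatim'' with midpoint sums is not an argument. The lemmas there (already Lemma \ref{0}, and the order-preservation Lemma \ref{p}) use the $\fm$-equation itself, which is exactly what you are trying to derive.

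The missing idea is to apply (4) to the pair $(2a_0,0)$, where $T(a_0)=0$, and use monotonicity of the norm on $B_+$, namely $\|x+y\|\ge\|y\|$ for $x,y\in B_+$:
\[
0=\|T(a_0)\|=\left\|\tfrac{T(2a_0)+T(0)}{2}\right\|\ge\tfrac12\|T(0)\|,
\]
so $T(0)=0$. This is the norm analogue of the pointed-cone argument you sketch for (2)$\Rightarrow$(1). There, the same pair gives $T(2a_0)+T(0)=0$ with both summands in $B_+$. That direct implication is redundant anyway, since (2)$\Rightarrow$(4) is trivial.

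The final step is also unfinished, although you already hold all the ingredients. Once $T(0)=0$, (4) applied to $(x,0)$ gives $\|T(x/2)\|=\tfrac12\|T(x)\|$ for every $x\in A_+$. With $x=a+b$ this says $\|S(a+b)\|=2\|T(\tfrac{a+b}{2})\|=\|T(a+b)\|$. Combined with your identity $\|S(a+b)\|=\|T(a)+T(b)\|$, this is exactly (3) for $T$. You do not need to show that $S$ is an $\fm$-map or to ``iterate the rescaling over dyadic scales''; as written, that part is a promise rather than a proof. With these two one-line fixes your argument coincides with the paper's.
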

Moln\'ar \cite[Proposition 1]{mol17} characterized  bijections between the positive definite cones of unital $C^*$-algebras satisfying the Jensen equation. The following corollary provides an anlogous characterization for bijections satisfying the Fischer-Musz\'ely equation.
\begin{corollary}\label{pc}
    Suppose that $T\colon A_{++}\to B_{++}$ is a bijection. Then $T$ satisfies the $\fm$-equation 
    \[
    \|T(a+b)\|=\|T(a)+T(b)\|
    \]
    for every pair $a,b\in A_{++}$ if and only if there is a Jordan $*$-isomorphism $J\colon A\to B$ such that 
    $T(a)=T(e)^\frac12 J(a)T(e)^\frac12$ 
    for every $a\in A_{++}$.
\end{corollary}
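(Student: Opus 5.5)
The \emph{if} direction is immediate. If $T(a)=cJ(a)c$ with $c=T(e)^{\frac12}$, then $T$ is additive on $A_{++}$, so $\|T(a+b)\|=\|T(a)+T(b)\|$ holds trivially. Moreover, a Jordan $*$-isomorphism maps invertible positive elements to invertible positive elements, and congruence by the invertible positive $c$ preserves $B_{++}$. So $T$ is a bijection $A_{++}\to B_{++}$.

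For the \emph{only if} direction, I would not try to extend $T$ to $A_+$ by a limit such as $\lim_{t\downarrow0}T(a+te)$. Monotone nets need not converge in norm in a general $C^*$-algebra, so that route is unreliable. Instead I plan to first prove that $T$ is additive on $A_{++}$, and then extend it algebraically.

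\textbf{Step 1 (additivity on $A_{++}$).} I would rerun the argument behind Theorem \ref{psdc} with $A_+,B_+$ replaced by $A_{++},B_{++}$. The ingredients of that argument are:
\begin{itemize}
\item the norm identities $\|T(na)\|=n\|T(a)\|$ obtained by iterating the $\fm$-equation;
\item the characterization of the order through norms of sums, e.g.\ $a\le b$ if and only if $\|a+x\|\le\|b+x\|$ for all $x$ in the cone;
\item the resulting order-preservation of $T$ and $T^{-1}$.
\end{itemize}
These only involve sums of elements of the cone. Since $A_{++}$ is closed under addition and under multiplication by positive scalars, they should transfer verbatim. The cone $A_{++}$ is also norm-dense in $A_+$ and contains $e+A_+$, so any norm characterization using test elements $x\in A_+$ can be approximated by $x+\varepsilon e\in A_{++}$. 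This step is the main obstacle. The delicate point is any place in the proof of Theorem \ref{psdc} that uses $0$ or non-invertible elements (for instance $T(0)=0$, or the use of elements with nontrivial kernel to detect order). My first attempt there is to replace them by $\varepsilon e$ and let $\varepsilon\to0$, using only the continuity of the norm on the $A$-side.

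\textbf{Step 2 (extension).} Once $T$ is additive on $A_{++}$, define $\tilde T\colon A_+\to B$ by $\tilde T(a)=T(a+e)-T(e)$.
\begin{itemize}
\item \emph{Additivity.} We have $\tilde T(a)+\tilde T(b)=T(a+e)+T(b+e)-2T(e)=T(a+b+2e)-2T(e)=\tilde T(a+b)$, using additivity twice.
\item \emph{Agreement with $T$.} For $a\in A_{++}$, additivity gives $T(a+e)=T(a)+T(e)$, so $\tilde T=T$ on $A_{++}$.
\item \emph{Positivity.} Given $a\in A_+$, the element $a+\varepsilon e$ lies in $A_{++}$, so $\tilde T(a)=T(a+\varepsilon e)-\tilde T(\varepsilon e)=T(a+\varepsilon e)-T(\varepsilon e)$. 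Additivity implies $\mathbb{Q}_+$-homogeneity, and the order-preservation from Step 1 gives monotonicity; together these yield $T(\varepsilon e)=\varepsilon T(e)\to 0$. Since $T(a+\varepsilon e)\ge 0$, letting $\varepsilon\downarrow0$ along rationals shows $\tilde T(a)$ is a norm limit of elements of the form $(\text{positive})-\varepsilon T(e)$. Hence $\tilde T(a)\in B_+$.
\item \emph{Bijectivity.} The same construction applied to $T^{-1}$, which is also additive, produces an additive $\widetilde{T^{-1}}$ that is a two-sided inverse of $\tilde T$.
\end{itemize}

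Thus $\tilde T\colon A_+\to B_+$ is an additive bijection, in particular an $\fm$-map. Theorem \ref{psdc} then yields a Jordan $*$-isomorphism $J$ with $\tilde T(a)=\tilde T(e)^{\frac12}J(a)\tilde T(e)^{\frac12}$. Restricting to $A_{++}$ and using $\tilde T(e)=T(e)$ gives the stated form of $T$.
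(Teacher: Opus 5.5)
Your Step 2 is fine \emph{once} additivity on $A_{++}$ is known. For the normalized map $T_0(x)=T(c_0^{\frac12}xc_0^{\frac12})$ with $T_0(e)=e$, your formula $T_0(a+e)-T_0(e)$ is in fact exactly the paper's extension $\lim_n T_0(a+2^{-n}e)$. The gap is Step 1, which carries the whole difficulty and does not ``transfer verbatim'':
\begin{itemize}
\item The proof of Theorem \ref{psdc} never derives additivity from order arguments. It derives the norm identity $\|T_0(a)+T_0(b)\|=\|a+b\|$, and only then obtains additivity by invoking the Kadison-type result \cite[Theorem 2.5]{dongli}. That result concerns surjections $A_+\to B_+$, so rerunning the argument on $A_{++}$ would need an $A_{++}$-analogue, which your plan neither supplies nor cites.
\item The identity $\|T_0(a)\|=\|a\|$ (Lemma \ref{e13,e14,21}) uses that $\lambda e-a$, with $\lambda=\|a\|$, is \emph{not} invertible. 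It then transports non-invertibility through $T^{-1}(B_{++})\subset A_{++}$, a hypothesis that is empty on $A_{++}$.
\item Your patch (replace singular elements by $\varepsilon e$ and use continuity of the norm on the $A$-side) cannot work as stated. $T$ is not known to be continuous, so perturbing on the $A$-side tells you nothing about the values of $T$.
\item Order-preservation of $T^{-1}$ is not an ingredient of the paper's argument. The authors explicitly say they could not prove it, so you cannot list it among things that transfer.
\end{itemize}

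The missing idea is that you do not need additivity before extending; the translation identity is enough. After normalizing, prove on $A_{++}$ that $T_0(a+\lambda e)=T_0(a)+\lambda e$ (Lemma \ref{55}, obtained as in Lemma \ref{5}). This is where the $\varepsilon e$-replacement genuinely works, because the identity gives exact control on the $B$-side (cf.\ the proof of Lemma \ref{pp}). It yields $T_0(a+2^{-n}e)-T_0(a+2^{-m}e)=(2^{-n}-2^{-m})e$ exactly, so your worry about monotone nets is beside the point. It also gives directly that the extension $\tilde T_0$ to $A_+$ is:
\begin{itemize}
\item injective;
\item surjective, via $a_{n+1}+2^{-(n+1)}e=a_n$;
\item an $\fm$-map, by passing to the limit.
\end{itemize}
Then Theorem \ref{psdc}, where the boundary arguments and \cite[Theorem 2.5]{dongli} are available, does the heavy lifting for $\tilde T_0$, and undoing the normalization gives the form of $T$. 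The right order is to extend using only the translation identity and then apply the theorem. Your plan instead tries to prove the theorem's conclusion on $A_{++}$ first, which is exactly what the available tools do not provide.
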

\begin{theorem}\label{ontopsdc}
    Suppose that $T\colon A_{+}\to B_{+}$ is a surjection such that $T^{-1}(B_{++})\subset A_{++}$. 
    Then $T$ satisfies the $\fm$-equation 
    \[
    \|T(a+b)\|=\|T(a)+T(b)\|
    \]
    for every pair $a,b\in A_{+}$
    if and only if     
     there is a Jordan $*$-isomorphism $J\colon A\to B$ such that  
    $T(a)=T(e)^\frac12 J(a) T(e)^\frac12$ for every $a\in A_{+}$.
\end{theorem}
In the above theorem, the corresponding algebras $A$ and $B$ are Jordan $*$-isomorphic. Without assuming $T^{-1}(B_{++})\subset A_{++}$, we have a following example. 
\begin{example}
    Let $A_1$ and $A_2$ be unital $C^*$-algebras. Let $T\colon A_{1+}\oplus A_{2+}\to A_{1+}$ (resp. $T\colon A_{1++}\oplus A_{2++}\to A_{1++}$) be defined as $T(a\oplus b)=a$ for every pair $a\in A_{1+}$ (resp. $a\in A_{1++}$) and $b\in A_{2+}$ (resp. $b\in A_{2++}$). Then $T$ is an additive surjection. Hence, it satisfies the $\fm$-equation. 
    In this case, $A_1\oplus A_2$ and $A_1$ need not be Jordan $*$-isomorphic.
\end{example}
\section{Preliminary Lemmata}\label{spre}
Throughout the section, 
$T\colon A_{+}\to B_{+}$ is a {\it surjection} which is an $\fm$-map in the sense that it satisfies the $\fm$-equation $\|T(a+b)\|=\|T(a)+T(b)\|$ for every pair $a,b\in A_{+}$. 
Note that we do not apply the injectivity of $T$ in the proofs of Lemmata \ref{0} through  
\ref{e13,e14,21}.

In this section, $c_0$ denotes any element in $A_+$ such that $T(c_0)=e$. 
\begin{lemma}\label{0}
    We have $T(0)=0$.
\end{lemma}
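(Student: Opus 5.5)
We need $T(0)=0$, using only that $T\colon A_+\to B_+$ is a surjective $\fm$-map (injectivity is not used).

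The natural first move is to plug $a=b=0$ into the $\fm$-equation. This gives $\|T(0)\|=\|2T(0)\|=2\|T(0)\|$, hence $\|T(0)\|=0$ and $T(0)=0$. That already proves the lemma, and surjectivity is not even needed.

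For robustness I would also record a second route that uses surjectivity and would survive in a setting without the element $0$. Take $a=0$ and $b$ arbitrary; then
\[
\|T(b)\|=\|T(0)+T(b)\|\quad\text{for all } b\in A_+ .
\]
Since $T$ is surjective onto $B_+$, this becomes $\|y\|=\|T(0)+y\|$ for every $y\in B_+$. Choosing $y=T(0)\in B_+$ gives $\|T(0)\|=2\|T(0)\|$, so again $T(0)=0$.

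There is no real obstacle here; the only point to check is that $0\in A_+$, so the substitution $a=b=0$ is legitimate.
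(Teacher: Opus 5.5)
Your first argument is exactly the paper's proof: substituting $a=b=0$ into the $\fm$-equation gives $\|T(0)\|=2\|T(0)\|$, hence $T(0)=0$, with no use of surjectivity. Your second route is correct but redundant. Choosing $y=T(0)$ is just the substitution $b=0$ again, and the route still uses $a=0\in A_+$, so, contrary to your remark, it would not survive in a setting without the element $0$.
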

\begin{proof}
    As 
  \[
  \|T(0)\|=\|T(0+0)\|=\|T(0)+T(0)\|,
  \]
  we have $T(0)=0$. 
\end{proof}
We easily establish that $T$ is order-preserving.
\begin{lemma}\label{p}
    The map $T$ preserves the order; i.e., $a\le b$ ensures that $T(a)\le T(b)$ for every pair $a,b\in A_{+}$.
\end{lemma}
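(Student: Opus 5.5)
The plan is to characterize the order on a unital $C^*$-algebra purely in terms of norms of sums of positive elements, since the $\fm$-equation together with surjectivity transports exactly this kind of data. Concretely, I would first establish the following criterion: for $x,y\in B_+$,
\[
x\le y \iff \|x+z\|\le\|y+z\| \text{ for every } z\in B_+ .
\]

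The forward direction holds because $0\le x+z\le y+z$ gives $\|x+z\|\le\|y+z\|$. For the converse, suppose $x\not\le y$. Then the self-adjoint element $h=x-y$ has some spectral value $\mu>0$. Choose $\lambda\ge\|y\|$ and put $z=\lambda e-y\in B_+$. Then $y+z=\lambda e$, so $\|y+z\|=\lambda$. On the other hand $x+z=\lambda e+h$, and $\lambda e+h\ge x\ge 0$, so $\|x+z\|=\lambda+\max\sigma(h)\ge\lambda+\mu>\lambda$. This contradicts the assumed inequality, which proves the criterion.

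Next I would show that $u\mapsto\|T(u)\|$ is monotone on $A_+$. Let $u\le v$ and write $v=u+d$ with $d\in A_+$. By the $\fm$-equation,
\[
\|T(v)\|=\|T(u)+T(d)\|.
\]
Since $T(d)\ge 0$, we have $0\le T(u)\le T(u)+T(d)$, and therefore $\|T(u)\|\le\|T(v)\|$.

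Finally, let $a\le b$ in $A_+$ and take any $w\in B_+$. By surjectivity, $w=T(c)$ for some $c\in A_+$. Since $a+c\le b+c$, the $\fm$-equation and the monotonicity step give
\[
\|T(a)+w\|=\|T(a+c)\|\le\|T(b+c)\|=\|T(b)+w\|.
\]
Because $w\in B_+$ was arbitrary, the criterion yields $T(a)\le T(b)$. Note that injectivity is never used, consistent with the remark at the start of the section.

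The only step needing real care is the converse direction of the criterion, specifically the norm computation $\|\lambda e+h\|=\lambda+\max\sigma(h)$. This is justified by the spectral theorem for the self-adjoint element $\lambda e+h$, which is positive because $\lambda e+h=x+z\ge x\ge 0$.
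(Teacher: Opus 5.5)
Your proof is correct and follows the paper's route. The paper derives $\|T(a)+T(x)\|\le\|T(b)+T(x)\|$ for all $x\in A_+$ from the FM-equation and positivity, exactly as in your monotonicity step, and then cites Lemma 2.6 of \cite{dongli} for the norm characterization of the order; you instead prove that criterion directly with the single test element $z=\lambda e-y$, which makes the argument self-contained.
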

\begin{proof}
    Suppose that $a,b\in A_{+}$ with $a\le b$. Put $c=b-a$. Then $0\le c$ and $a+c=b$. We have by the $\fm$-equation that
    \begin{multline*}
\|T(b)+T(x)\|=\|T(a+c)+T(x)\|\\
=\|T(a+c+x)\|=\|T(a+x)+T(c)\|
    \end{multline*}
    for every $x\in A_+$. 
    Since $0\le T(a+x), T(c)$, we ensure by the FM-equation that
    \[
    \|T(a+x)+T(c)\|\ge \|T(a+x)\|=\|T(a)+T(x)\|.
    \]
    Thus we have
    \begin{equation}\label{3.1}
    \|T(b)+T(x)\|\ge \|T(a)+T(x)\|.
    \end{equation}
    As $T$ is surjective, Lemma 2.6 in \cite{dongli} 
    ensures       
    \[
    T(a)\le T(b).
    \]
\end{proof}
If one could easily prove that $T^{-1}$ is also order-preserving whenever $T$ is a bijective  map, then the remainder of the proof of Theorem \ref{psdc} would not be difficult. Unfortunately, we have been unable to find such a proof. Consequently, the full argument is required to establish Theorem \ref{psdc}.
\begin{lemma}\label{4}
For any positive $\alpha$, we have $T(\alpha c_0)=\alpha e$. 
\end{lemma}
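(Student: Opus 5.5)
The plan is to reduce the statement to a norm identity and then use the elementary fact that a positive element $g\in B_+$ satisfying
\[
\|g+y\|=\|g\|+\|y\|\quad\text{for every } y\in B_+
\]
must equal $\|g\|e$. To see this fact, take $y=\|g\|e-g\ge 0$: then $\|g+y\|=\|g\|$, so $\bigl\|\,\|g\|e-g\,\bigr\|=0$. By surjectivity of $T$, every $y\in B_+$ has the form $T(x)$. So it suffices to show
\[
\|T(\alpha c_0)\|=\alpha \quad\text{and}\quad \|T(\alpha c_0)+T(x)\|=\alpha+\|T(x)\| \quad\text{for all } x\in A_+ .
\]

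\textbf{Step 1 (two basic relations).} The $\fm$-equation gives subadditivity of the norm:
\[
\|T(a+b)\|=\|T(a)+T(b)\|\le\|T(a)\|+\|T(b)\|.
\]
Since $\|e+y\|=1+\|y\|$ for $y\ge 0$, it also gives the shift rule
\[
\|T(c_0+z)\|=\|e+T(z)\|=1+\|T(z)\|\qquad(z\in A_+).
\]

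\textbf{Step 2 (the norm $h(s)=\|T(sc_0)\|$).} Taking $a=b$ in the $\fm$-equation gives $h(2s)=2h(s)$. The shift rule with $z=sc_0$ gives $h(s+1)=1+h(s)$, and $h(0)=0$ by Lemma \ref{0}. From the doubling and shift rules we get $h(r)=r$ for every dyadic rational $r\ge 0$ by induction on the exponent of $2$. For instance, $2h(1/2)=h(1)=1$, and more generally $h(k/2^{m})=k/2^{m}$ follows by halving and shifting. Lemma \ref{p} shows that $h$ is nondecreasing. Squeezing between dyadic rationals then yields $h(s)=s$ for all $s\ge0$.

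\textbf{Step 3 (the lower bound, the main point).} For $0<s\le 1$ I combine the shift rule with subadditivity, applied to $c_0+x=(sc_0+x)+(1-s)c_0$:
\[
1+\|T(x)\|=\|T(c_0+x)\|\le \|T(sc_0+x)\|+h(1-s)=\|T(sc_0+x)\|+1-s.
\]
Hence $\|T(sc_0)+T(x)\|=\|T(sc_0+x)\|\ge s+\|T(x)\|$. The reverse inequality is the triangle inequality together with $h(s)=s$. For $s>1$, I write $s=n+t$ with $n$ an integer and $0\le t<1$, and apply the shift rule $n$ times to $z=tc_0+x$. This reduces that case to the case already treated (or to $t=0$).

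With $g=T(\alpha c_0)$ these steps give the required identity, so $T(\alpha c_0)=\alpha e$. I expect Step 3 to be the main obstacle: it is the only place where scalarity of $T(\alpha c_0)$, rather than just its norm, is extracted. The complementing decomposition $c_0+x=(sc_0+x)+(1-s)c_0$ is the device I would rely on there.
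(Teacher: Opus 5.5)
Your proof is correct, and its key inequality is one the paper also uses. In the paper's dyadic induction, $\|T(x)\|+2^{-k}=\|T(x+2^{-(k+1)}c_0)+T(2^{-(k+1)}c_0)\|\le\|T(x+2^{-(k+1)}c_0)\|+2^{-(k+1)}$ is exactly your complementing decomposition, at scale $2^{-k}$ with two equal pieces. The routes differ in the order of the steps and in how scalarity is extracted.

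The paper works with operators from the start. For $r=2^{-n}$ it reads off $T(rc_0)\le re$ from $\|T(rc_0)\|=r$. It gets $re\le T(rc_0)$ by feeding $\|T(x)+re\|\le\|T(x)+T(rc_0)\|$ into Lemma~2.6 of \cite{dongli}. It then needs a second induction on $m$, again through that lemma in both directions, to reach $T(\frac{m}{2^n}c_0)=\frac{m}{2^n}e$. Finally it squeezes $\alpha_n e\le T(\alpha c_0)\le\beta_n e$ in the operator order via Lemma~\ref{p}.

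You instead determine the scalar function $h$ completely first. This makes the additivity induction on $m$ unnecessary, since doubling and shifting suffice at the level of norms. You then obtain $T(sc_0)=se$ for all $s$ at once, replacing the order characterization by the single witness $y=\|g\|e-g\in T(A_+)$. This is the same kind of substitution the paper itself uses later, e.g.\ in Lemmata~\ref{5} and~\ref{p24}.

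As a result, your proof of Lemma~\ref{4} is independent of \cite{dongli}. Even your use of Lemma~\ref{p} can be dropped: for $0\le s\le t$ the $\fm$-equation gives $h(t)=\|T(sc_0)+T((t-s)c_0)\|\ge\|T(sc_0)\|=h(s)$. Within the paper this saving does not matter much. Lemma~2.6 of \cite{dongli} is needed anyway for Lemma~\ref{p}, which is used repeatedly afterwards, so the paper simply reuses one tool throughout.
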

\begin{proof}
Note that there exists $c_0\in A_+$ with $T(c_0)=e$ as $T$ is a surjection. 
    First, we prove by induction on $n$ that 
    \begin{equation}\label{eq3.2-0}
    T\left(2^{-n}c_0\right)=2^{-n}e
    \end{equation}
    for every positive integer $n$. 
Suppose that $n=1$. We have by the $\fm$-equation that 
\begin{multline*}
\|T(2^{-1}c_0)\|=2^{-1}\|T(2^{-1}c_0)+T(2^{-1}c_0)\|
\\
=2^{-1}\|T(2^{-1}c_0+2^{-1}c_0)\|=2^{-1}\|T(c_0)\|=2^{-1}\|e\|=2^{-1}.
\end{multline*}
As the norm coincides with the spectral norm on self-adjoint elements, we observe that
\begin{equation}\label{n1}
T(2^{-1}c_0)\le 2^{-1}e. 
\end{equation}
We also have
\begin{multline*}
\|T(x)\|+1=\|T(x)+e\|=\|T(x)+T(c_0)\|=\|T(x+c_0)\|
\\
=\|T(x+2^{-1}c_0)+T(2^{-1}c_0)\|\le \|T(x+2^{-1}c_0)\|+\|T(2^{-1}c_0)\|
\\
\le\|T(x)+T(2^{-1}c_0)\|+2^{-1}
\end{multline*}
for every $x\in A_+$. Thus, 
\[
    \|T(x)+2^{-1}e\|
    =\|T(x)\|+2^{-1}\le 
    \|T(x)+T(2^{-1}c_0)\|
\]
for every $x\in A_+$. 
As $T(A_+)=B_{+}$, we see by Lemma 2.6 in \cite{dongli} that 
$2^{-1}e\le T(2^{-1}c_0)$. Applying \eqref{n1}, we have $T(2^{-1}c_0)=2^{-1}c_0$.  
Suppose that \eqref{eq3.2-0} holds for $n=k$. We prove \eqref{eq3.2-0} for $n=k+1$. 
By the $\fm$-equation, we have 
\begin{multline*}
    \|T(2^{-(k+1)}c_0)\|=2^{-1}\|T(2^{-(k+1)}c_0)+T(2^{-(k+1)}c_0)\|\\
    =2^{-1}\|T(2^{-k}c_0)\|=2^{-1}\|2^{-k}e\|=2^{-(k+1)}.
\end{multline*}
Thus, 
\begin{equation}\label{n2}
T(2^{-(k+1)}c_0)\le 2^{-(k+1)}e.
\end{equation}
We also have
\begin{multline*}
    \|T(x)\|+2^{-k}=\|T(x)+2^{-k}e\|=\|T(x)+T(2^{-k}c_0)\|
    \\
    =\|T(x+2^{-k}c_0)\|=\|T(x+2^{-(k+1)}c_0)+T(2^{-(k+1)}c_0)\|
    \\
    \le \|T(x+2^{-(k+1)}c_0)\|+\|T(2^{-(k+1)}c_0)\|
    \\
    \le \|T(x)+T(2^{-(k+1)}c_0)\|+2^{-(k+1)}
\end{multline*}
for every $x\in A_+$. 
Hence, 
\[
\|T(x)+2^{-(k+1)}e\|=\|T(x)\|+2^{-(k+1)}\le \|T(x)+T(2^{-(k+1)}c_0)\|
\]
for every $x\in A_+$. Again by Lemma 2.6 in \cite{dongli}, we have 
\[
2^{-(k+1)}e\le T(2^{-(k+1)}c_0).
\]
Applying \eqref{n2}, we have 
$T(2^{-(k+1)}c_0)=2^{-(k+1)}e$. Hence, we have \eqref{eq3.2-0} by induction. 
  
        Next we prove that 
        \begin{equation}\label{eq3.2-2}
T\left(\frac{m}{2^n}c_0\right)=\frac{m}{2^n}e
        \end{equation}
        for all positive integers $m$ and $n$. 
Let $n$ be arbitrary. We prove \eqref{eq3.2-2} by induction on $m$. The case of $m=1$ has been proved. Suppose that \eqref{eq3.2-2} holds for $m=k$. We prove \eqref{eq3.2-2} for $m=k+1$. By the assumption, $T\left({\frac{k}{2^n}}c_0\right)=\frac{k}{2^n}e$, we have
\begin{multline*}
    \|T(x)+T\left(2^{-n}c_0\right)+T\left(\frac{k}{2^n}c_0\right)\|=
    \|T(x)+T\left(2^{-n}c_0\right)\|+\frac{k}{2^n}
    \\
    =
    \|T(x+2^{-n}c_0)\|+\frac{k}{2^n}=
    \|T(x+2^{-n}c_0)+T\left(\frac{k}{2^n}c_0\right)\|
    \\
    =\|T(x)+T\left(2^{-n}c_0+\frac{k}{2^n}c_0\right)\|=
    \|T(x)+T\left(\frac{k+1}{2^n}c_0\right)\|
\end{multline*}
for every $x\in A_+$. 
By Lemma 2.6 in \cite{dongli}, we have 
$T\left(2^{-n}c_0\right)+T\left(\frac{k}{2^n}c_0\right)\le T\left(\frac{k+1}{2^n}c_0\right)$ and 
$T\left(\frac{k+1}{2^n}c_0\right)\le T\left(2^{-n}c_0\right)+T\left(\frac{k}{2^n}c_0\right)$.
Thus, 
we have
\[
T\left(\frac{k+1}{2^n}c_0\right)=
T\left(2^{-n}c_0\right)+T\left(\frac{k}{2^n}c_0\right)=\frac{k+1}{2^n}e.
\]
Hence \eqref{eq3.2-2} holds for $n=k+1$. 
By induction, we have \eqref{eq3.2-2} for all positive integers $m$ and $n$.

Let $\alpha>0$ be arbitrary. We prove $T(\alpha c_0)=\alpha e$. Let $\{\alpha_n\}$ and $\{\beta_n\}$ be sequences of rational numbers of the forms $\frac{l}{2^m}$ for positive integers $l$ and $m$ such that $\alpha_n\le \alpha\le \beta_n$ and $\lim_{n\to \infty}\alpha_n=\lim_{n\to \infty}\beta_n=\alpha$. By Lemma \ref{p} and \eqref{eq3.2-2} we have
\[
\alpha_ne=T(\alpha_n c_0)\le T(\alpha c_0)\le T(\beta_n c_0)=\beta_n e.
\]
Letting $n\to \infty$, we have the conclusion. 
\end{proof}
\begin{lemma}\label{3}
For the case where $T(e)=e$,  
we have
\[
\|T(a-b)\|=\|T(a)-T(b)\|
\]
for every pair $a,b\in A_{+}$ with $b\le a$. 
\end{lemma}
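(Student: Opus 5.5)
Since $T(e)=e$, we may take $c_0=e$ in Lemma \ref{4}, so $T(\lambda e)=\lambda e$ for every $\lambda>0$. The plan is to compute both sides of the identity in Lemma \ref{3} by adding a large multiple of the unit and subtracting it again. This uses the elementary fact that for $P\in B_+$ and $\lambda\ge 0$ we have $\|P+\lambda e\|=\|P\|+\lambda$, because the norm of a positive element is the maximum of its spectrum. Put $c=a-b\in A_+$ and fix $\lambda\ge\|b\|$, so that $\lambda e-b\in A_+$.

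\emph{Left-hand side.} By the $\fm$-equation and Lemma \ref{4},
\[
\|T(c)\|=\|T(c)+\lambda e\|-\lambda=\|T(c)+T(\lambda e)\|-\lambda=\|T(c+\lambda e)\|-\lambda .
\]

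\emph{Right-hand side.} By Lemma \ref{p}, $T(b)\le T(a)$, so $T(a)-T(b)\in B_+$ and therefore
\[
\|T(a)-T(b)\|=\|T(a)-T(b)+\lambda e\|-\lambda .
\]
Suppose we had the ``complement identity''
\[
T(\lambda e-b)=\lambda e-T(b)\qquad (\lambda\ge\|b\|).
\]
Then
\[
\|T(a)-T(b)+\lambda e\|=\|T(a)+T(\lambda e-b)\|=\|T(a+\lambda e-b)\|=\|T(c+\lambda e)\|,
\]
and comparing with the left-hand side would finish the proof.

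\emph{The complement identity (main obstacle).} One inequality is easy. Both $T(b)$ and $T(\lambda e-b)$ are positive, and
\[
\|T(b)+T(\lambda e-b)\|=\|T(\lambda e)\|=\lambda ,
\]
so $T(b)+T(\lambda e-b)\le \lambda e$, that is, $T(\lambda e-b)\le\lambda e-T(b)$.

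For the reverse inequality I would follow the pattern of Lemma \ref{4} and invoke Lemma 2.6 of \cite{dongli}. Since $T$ is surjective, it suffices to show
\[
\|T(x)+\lambda e-T(b)\|\le\|T(x)+T(\lambda e-b)\|=\|T(x+\lambda e-b)\|\qquad\text{for all } x\in A_+ .
\]
I would first try to exploit the identity $\|T(x+\lambda e-b)+T(b)\|=\|T(x)\|+\lambda$, together with the triangle inequality in the form used in Lemma \ref{4}. The aim is to bound $\|T(x)+\lambda e-T(b)\|$ from above by $\|T(x)+\lambda e\|$ minus a quantity controlled by the spectrum of $T(b)$.

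If this route does not close, I would instead prove the identity only for $b$ of the form $\mu e$ together with small perturbations, and then pass to the general case using the monotonicity from Lemma \ref{p}.

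This step is where I expect all the real difficulty to lie. The rest of the argument is the routine bookkeeping with $\lambda e$ described above.
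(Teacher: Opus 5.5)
Your proof has a genuine gap. Everything hinges on the complement identity $T(\lambda e-b)=\lambda e-T(b)$, and you prove only its easy half, $T(\lambda e-b)\le\lambda e-T(b)$. The other half is left open, and the routes you sketch do not close it.
\begin{itemize}
\item Testing against Lemma 2.6 of \cite{dongli} is just a reformulation of the missing inequality, not progress toward it.
\item The triangle-inequality route only gives the lower bound $\|T(x+\lambda e-b)\|\ge\|T(x)\|+\lambda-\|T(b)\|$, which involves the top of the spectrum of $T(b)$. But $\|T(x)+\lambda e-T(b)\|$ can be as large as $\|T(x)\|+\lambda-\min\sigma(T(b))$ (take $x=0$). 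So this bound cannot deliver what you need unless $T(b)$ is a scalar multiple of $e$.
\item The perturbation plan is not an argument.
\end{itemize}
More fundamentally, the identity says $T$ is additive on every pair summing to $\lambda e$. That is essentially the additivity the whole paper is working toward, so you should not expect it to fall out of Lemmata \ref{0}, \ref{p} and \ref{4}.

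The identity is not needed; each inequality follows from something weaker, and this is how the paper argues.

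For $\|T(a-b)\|\le\|T(a)-T(b)\|$, the half you did prove suffices. Since $a+(\lambda e-b)=c+\lambda e$, the $\fm$-equation gives $\|T(c)\|+\lambda=\|T(c+\lambda e)\|=\|T(a)+T(\lambda e-b)\|$. Moreover $0\le T(a)+T(\lambda e-b)\le T(a)-T(b)+\lambda e$. Monotonicity of the norm on $B_+$ then bounds this by $\|T(a)-T(b)\|+\lambda$.

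For the reverse inequality, do not try to compute $T(\lambda e-b)$ at all; realize the complement on the $B$ side instead. Put $t=\|T(b)\|+1$ and use surjectivity to choose $x\in A_+$ with $T(x)=te-T(b)$ (this $x$ need not equal $te-b$). Then Lemma \ref{p}, the $\fm$-equation and the triangle inequality give
\begin{align*}
\|T(a)-T(b)\|+t&=\|T(a)-T(b)+te\|=\|T(a)+T(x)\|=\|T(a+x)\|=\|T(c)+T(b+x)\|\\
&\le\|T(c)\|+\|T(b+x)\|=\|T(c)\|+\|T(b)+T(x)\|=\|T(c)\|+t.
\end{align*}
The triangle inequality absorbs the discrepancy between $x$ and $te-b$, which is exactly what your approach could not handle.
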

\begin{proof}
    Suppose that $a,b\in A_{+}$ satisfy that $b\le a$. Then, $0\le a-b$. 
    Note that $T(a-b)$ is well defined.
    We first prove that $\|T(a-b)\|\ge \|T(a)-T(b)\|$. 
    Let $x\in A_{+}$ be arbitrary. 
    Applying the $\fm$-equation several times, we have
    \begin{multline}\label{eq3.2-1}
        \|T(a-b)\|+\|T(b)+T(x)\|=\|T(a-b)\|+\|T(b+x)\| \\
        \ge\|T(a-b)+T(b+x)\| =
        \|T(a+x)\|=\|T(a)+T(x)\|
    \end{multline}
    for every $x\in A_+$. Letting $t=\|T(b)\|+1$, we observe that $0< te-T(b)$. Recall that $0<y$ means that $0\le y$ and $y$ is invertible. 
    As $T$ is surjective, there exists $x\in A_+$ such that  
     $T(x)=te-T(b)$. Applying $T(x)=te-T(b)$ in \eqref{eq3.2-1}, we have
    \[
    \|T(a-b)\|+\|T(b)+te-T(b)\|\ge \|T(a)+te-T(b)\|,
    \]
    hence
    \[
    \|T(a-b)\|+t\ge \|T(a)-T(b)+te\|.
    \]
    By Lemma \ref{p}, we have $0\le T(a)-T(b)$. Thus
    \[
    \|T(a-b)\|+t\ge \|T(a)-T(b)\|+t,
    \]
    hence we get the desired inequality
    \[
    \|T(a-b)\|\ge \|T(a)-T(b)\|.
    \]

    We prove the reverse inequality. 
    Let $t=\|b\|+1$. Then $0< te-b$.
   Let $x=te-b$. As we consider the case where $T(e)=e$, we have $T(b+x)=T(te)=te$ by Lemma \ref{4}.
      We have by the $\fm$-equation that 
   \begin{multline}\label{eq3-1}
   \|T(a-b)\|+t=\|T(a-b)+te\|\\
   =\|T(a-b)+T(b+x)\|=\|T(a+x)\|=\|T(a)+T(x)\|,
   \end{multline}
   and 
   \[
   \|T(x)+T(b)\|=\|T(x+b)\|=\|te\|=t.
   \]
   Hence $T(x)+T(b)\le te$, so $T(x)\le te-T(b)$. Thus 
   \[
   T(a)+T(x)\le T(a)+te-T(b).
   \]
   As $0\le T(a)-T(b)$ by Lemma \ref{p}, we have
   \begin{equation}\label{eq3-2}
   \|T(a)+T(x)\|\le \|T(a)-T(b)+te\|=\|T(a)-T(b)\|+t.
   \end{equation}
   Combining \eqref{eq3-1} and \eqref{eq3-2}, we get $$\|T(a-b)\|\le \|T(a)-T(b)\|$$. 
\end{proof}

\begin{lemma}\label{5}
For every positive real number $\lambda$ and $a\in A_+$, we have
\[
T(a+\lambda c_0)=T(a)+\lambda e.
\]
\end{lemma}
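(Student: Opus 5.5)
Following the pattern of Lemma \ref{4}, we obtain the identity from two opposite operator inequalities, each coming from Lemma 2.6 in \cite{dongli}. That lemma says: if $\|u+y\|\le\|v+y\|$ for every $y\in B_+$, then $u\le v$. Since $T$ is surjective, it suffices to compare $\|T(a+\lambda c_0)+T(x)\|$ with $\|T(a)+\lambda e+T(x)\|$ for all $x\in A_+$.

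First, fix $x\in A_+$. Applying the $\fm$-equation twice and then Lemma \ref{4}, which gives $T(\lambda c_0)=\lambda e$, we get
\[
\|T(a+\lambda c_0)+T(x)\|=\|T(a+x+\lambda c_0)\|=\|T(a+x)+T(\lambda c_0)\|=\|T(a+x)+\lambda e\|.
\]
Since $T(a+x)\ge 0$, the right-hand side equals $\|T(a+x)\|+\lambda=\|T(a)+T(x)\|+\lambda$. On the other side, $T(a)+T(x)\ge0$ as well, so
\[
\|T(a)+\lambda e+T(x)\|=\|T(a)+T(x)\|+\lambda .
\]
Hence $\|T(a+\lambda c_0)+y\|=\|T(a)+\lambda e+y\|$ for every $y\in B_+$, because every $y\in B_+$ has the form $T(x)$ by surjectivity. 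Lemma 2.6 in \cite{dongli}, applied in both directions, then gives
\[
T(a+\lambda c_0)\le T(a)+\lambda e\quad\text{and}\quad T(a)+\lambda e\le T(a+\lambda c_0),
\]
and therefore equality.

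This is the same mechanism as the step ``$T(\frac{k+1}{2^n}c_0)=T(2^{-n}c_0)+T(\frac{k}{2^n}c_0)$'' in the proof of Lemma \ref{4}. The only point needing care is the precise hypothesis of Lemma 2.6 in \cite{dongli}. If it is stated for inequalities tested against all positive elements, it applies directly here, since we have equality for all $y\in B_+$.

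I do not expect a real obstacle. The key input is that adding a nonnegative multiple of $e$ to a positive element increases its norm by exactly that multiple. This is what makes the norm equality hold uniformly in $y$ and turns the norm identity into an operator identity.
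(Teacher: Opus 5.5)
Your proof is correct and follows the paper's: the paper derives the same identity $\|T(a+\lambda c_0)+T(x)\|=\|T(a)+T(x)\|+\lambda$ for all $x\in A_+$. It then, instead of citing Lemma 2.6 of \cite{dongli}, substitutes the specific test elements $T(x)=te-T(a)$ and $T(x)=se-T(a+\lambda c_0)$ (with $t=\|T(a)\|+1$, $s=\|T(a+\lambda c_0)\|+1$) to read off the two operator inequalities directly, which is exactly the argument behind that lemma and so also settles your concern about its precise hypothesis.
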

\begin{proof}
    For any $x\in A_+$, we have by the $\fm$-equation and Lemma \ref{4} that
    \begin{multline}\label{eq5-1}
    \|T(a+\lambda c_0)+T(x)\|=\|T(a+\lambda c_0+x)\|
    =\|T(a+x)+T(\lambda c_0)\|
    \\
    =\|T(a+x)+\lambda e\|
    =\|T(a+x)\|+\lambda =\|T(a)+T(x)\|+\lambda 
    \end{multline}
    for every $x\in A_+$. Put $t=\|T(a)\|+1$. Then $0< te- T(a)$. As $T$ is surjective, there exists $x\in A_+$ such that $T(x)=te-T(a)$. Substituting $T(x)=te-T(a)$ for \eqref{eq5-1}, we obtain
    \[
    \|T(a+\lambda c_0)+te-T(a)\|=\|T(a)+te-T(a)\|+\lambda=t+\lambda.
    \]
    Hence we get $T(a+\lambda c_0)+te-T(a)\le (t+\lambda)e$, so 
    \begin{equation}\label{eq5-2}
    T(a+\lambda c_0)\le T(a)+\lambda e. 
    \end{equation} 

Next, substituting $T(x)=se-T(a+\lambda c_0)$, where $s=\|T(a+\lambda c_0)\|+1$ with a suitable $x\in A_+$, for \eqref{eq5-1}, we get 
    \[
    s=\|T(a)+se-T(a+\lambda c_0)\|+\lambda.
    \]
    Hence, we have $T(a)+se-T(a+\lambda c_0)\le (s-\lambda)e$, so
    \begin{equation}\label{eq5-3}
    T(a)+\lambda e\le T(a+\lambda c_0).
    \end{equation}
    By \eqref{eq5-2} and \eqref{eq5-3}, we have $T(a+\lambda c_0)=T(a)+\lambda e$.
\end{proof}
\begin{lemma}\label{++}
    We have $T(A_{++})\subset B_{++}$.
\end{lemma}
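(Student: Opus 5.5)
Let $a\in A_{++}$. We aim to show that $T(a)$ dominates a positive multiple of $e$, which makes it invertible in $B$. We cannot simply compare $a$ with $c_0$ and apply Lemma \ref{p}, because $c_0$ need not be invertible: a lower bound $\lambda c_0\le a$ only yields $\lambda e\le T(a)$ if we know such a $\lambda$ exists. The plan is to find one. Note first that $\|c_0\|>0$: otherwise $c_0=0$, and Lemma \ref{0} would give $e=T(c_0)=0$, which is impossible.

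Since $a$ is positive and invertible, there is $\delta>0$ with $\delta e\le a$; for instance $\delta=\|a^{-1}\|^{-1}$. Also $c_0\le \|c_0\|e$. Put $\lambda=\delta/\|c_0\|>0$. Then
\[
\lambda c_0\le \lambda\|c_0\|e=\delta e\le a .
\]
By Lemma \ref{p} and Lemma \ref{4},
\[
\lambda e=T(\lambda c_0)\le T(a).
\]
Therefore $T(a)\ge\lambda e$ with $\lambda>0$. A positive element bounded below by a positive scalar is invertible, since its spectrum lies in $[\lambda,\infty)$. Hence $T(a)\in B_{++}$.

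An alternative route is to write $a=(a-\lambda c_0)+\lambda c_0$, where $a-\lambda c_0\in A_+$ by the inequality above. Lemma \ref{5} then gives directly
\[
T(a)=T(a-\lambda c_0)+\lambda e\ge\lambda e .
\]
Either way the argument is short. The only points needing care are that $c_0\neq 0$ and the choice of $\lambda$ so that $\lambda c_0\le a$; I expect no real obstacle.
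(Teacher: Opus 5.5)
Your proof is correct and follows the paper's argument: choose $t>0$ with $tc_0\le a$, then combine monotonicity (Lemma~\ref{p}) with $T(tc_0)=te$ to get $te\le T(a)$, so $T(a)$ is invertible. You also spell out details the paper leaves implicit: the explicit choice $t=\|a^{-1}\|^{-1}/\|c_0\|$, and the check $c_0\neq 0$. Note too that the paper cites Lemma~\ref{5} where Lemma~\ref{4} is what gives $T(tc_0)=te$; your alternative route via Lemma~\ref{5} shows that either lemma suffices.
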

\begin{proof}
    Let $a\in A_{++}$. 
    As $a$ is invertible, the minimum spectrum of $a$ is positive. Hence there exists $t>0$ such that $tc_0\le a$. 
    Then by Lemmata \ref{p} and \ref{5} we have 
    \[
    te=T(tc_0)\le T(a).
    \]
    Hence $T(a)$ is invertible. As $a\in A_{++}$ is arbitrary, we have that $T(A_{++})\subset B_{++}$. 
\end{proof}
\begin{lemma}\label{e13,e14,21}
Suppose that $T^{-1}(B_{++})\subset A_{++}$. 
If $T(e)=e$, then $\|T(a)\|=\|a\|$ for every $a\in A_{+}$.
\end{lemma}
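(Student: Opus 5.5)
Throughout, assume $T(e)=e$. Then $e$ is an admissible choice of $c_0$, so Lemmata \ref{4} and \ref{5} hold with $c_0=e$:
\[
T(\lambda e)=\lambda e,\qquad T(a+\lambda e)=T(a)+\lambda e\qquad (\lambda>0,\ a\in A_+).
\]
Fix $a\in A_+$ and put $t=\|a\|$. The case $a=0$ is Lemma \ref{0}, so let $t>0$. The plan is to prove $\|T(a)\|\le t$ by monotonicity, and then to rule out $\|T(a)\|<t$ by contradiction, using the hypothesis $T^{-1}(B_{++})\subset A_{++}$.

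\emph{Upper bound.} Since $a\le te$, Lemma \ref{p} gives $T(a)\le T(te)=te$. As $T(a)\ge 0$, this yields $s:=\|T(a)\|\le t$.

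\emph{Lower bound.} Suppose $s<t$, and put $w=te-a\in A_+$. Because $t=\|a\|$ lies in the spectrum of $a$, the element $w$ is not invertible. I will show that $T(w)\ge (t-s)e$. That makes $T(w)\in B_{++}$, so $w\in A_{++}$ by the hypothesis, which is a contradiction.

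To get $T(w)\ge(t-s)e$, I will verify the norm inequality required by Lemma 2.6 in \cite{dongli}, exactly as it was used in the proofs of Lemmata \ref{p} and \ref{4}. Let $x\in A_+$ be arbitrary. Since $w+x+a=te+x$, Lemma \ref{5} and the $\fm$-equation give
\[
\|T(x)\|+t=\|T(x)+te\|=\|T(x+te)\|=\|T(w+x)+T(a)\|\le \|T(w+x)\|+s .
\]
Combining this with the $\fm$-equation once more, and using $T(x)\ge0$, we obtain
\[
\|T(w)+T(x)\|=\|T(w+x)\|\ge \|T(x)\|+(t-s)=\|T(x)+(t-s)e\|
\]
for every $x\in A_+$. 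Since $T(A_+)=B_+$, Lemma 2.6 in \cite{dongli} then yields $(t-s)e\le T(w)$, as required.

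Hence $\|T(a)\|=\|a\|$ for every $a\in A_+$. The main point to check is that Lemma 2.6 in \cite{dongli} is applied in the same form as earlier in the paper: an inequality $\|y+z\|\ge\|c+z\|$ holding for all $z\in B_+$ implies $c\le y$. Surjectivity of $T$ lets $T(x)$ range over all of $B_+$, so this form applies. No injectivity of $T$ is used anywhere in the argument.
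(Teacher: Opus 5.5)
Your proof is correct and is essentially the paper's argument. The paper obtains the lower bound by applying Lemma \ref{3} to $te$ and $te-a$, which gives $\|T(a)\|=\|te-T(te-a)\|$, and then uses the non-invertibility of $T(te-a)$ (from $T^{-1}(B_{++})\subset A_{++}$). You instead re-derive inline exactly the half of Lemma \ref{3} that is needed and recast it, via Lemma 2.6 of \cite{dongli}, as the order inequality $T(te-a)\ge(\|a\|-\|T(a)\|)e$; this is the same estimate in different packaging.
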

\begin{proof}
Let $a\in A_{+}$. 
    First, we prove that $\|T(a)\|\le \|a\|$. Put $\lambda=\|a\|$. Then, $a\le \lambda e$. By Lemma \ref{p}, we infer that $T(a)\le T(\lambda e)$. We have $T(\lambda e)=\lambda e$ by Lemma \ref{4} so that
    $T(a)\le \lambda e$. Thus, $\|T(a)\|\le \|a\|$. 
   
We prove $\|T(a)\|=\|a\|$. 
   If $a=0$, then $T(a)=0$. Hence $\|T(a)\|=\|a\|$. Suppose that $a\ne 0$. 
    Put $\lambda=\|a\|$. Since $0\le a$, we have $\lambda\in \sigma(a)$, so $0\in \sigma(\lambda e-a)$. Hence, $0\le \lambda e-a\le \lambda e$. We also have that $\lambda e-a$ is not invertible since $0\in \sigma(\lambda e-a)$. 
     We infer that $T(\lambda e-a)$ is not invertible since $T^{-1}(B_{++})\subset A_{++}$, hence $0\in \sigma(T(\lambda e-a))$, and so $\lambda\in \sigma(\lambda e-T(\lambda e-a))$.
     On the other hand, we have by the first part that 
     \[
     \|T(\lambda e-a)\|\le\|\lambda e-a\|\le \|\lambda e\|=\lambda.
     \]
     Hence, $T(\lambda e-a)\le \lambda e$. Thus
     \[
     0\le \lambda e-T(\lambda e-a)\le \lambda e.
     \]
     Hence, $\|\lambda e-T(\lambda e-a)\|\le \lambda$. 
     We have already seen that $\lambda\in \sigma(\lambda e-T(\lambda e-a))$. It follows that 
     \[
     \|\lambda e-T(\lambda e-a)\|=\lambda=\|a\|.
     \]
     By Lemmata \ref{4} and \ref{3}, we have
     \[
     \|T(a)\|=\|T(\lambda e-(\lambda e-a))\|=\|T(\lambda e)-T(\lambda e-a)\|=\|\lambda e-T(\lambda e-a)\|.
     \]
     We conclude that $\|T(a)\|=\|a\|$. 
\end{proof}
\begin{lemma}\label{p23}
Suppose further that $T$ is a bijection. 
Suppose that $a, b\in A_+$ and $T(a)+T(b)=\|T(a)+T(b)\|e$. Then $T(a+b)=\|T(a)+T(b)\|e$. 
\end{lemma}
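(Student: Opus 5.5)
The plan is to compare $T(a+b)$ with $\mu e$, where $\mu=\|T(a)+T(b)\|$, using the same two-sided squeeze as in Lemmata \ref{4} and \ref{5}, with Lemma 2.6 of \cite{dongli} converting norm inequalities into order inequalities. If $\mu=0$ then $T(a)=T(b)=0=T(0)$. Injectivity and Lemma \ref{0} then give $a=b=0$, and the claim is trivial, so I assume $\mu>0$.

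\emph{Upper bound.} By the $\fm$-equation, $\|T(a+b)\|=\|T(a)+T(b)\|=\mu$. Since $T(a+b)\in B_+$, this gives $T(a+b)\le \mu e$, exactly as in \eqref{n1}.

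\emph{Lower bound.} It suffices to show that
\[
\|T(a+b)+T(x)\|\ge \mu+\|T(x)\|=\|\mu e+T(x)\| \qquad (x\in A_+),
\]
because surjectivity and Lemma 2.6 of \cite{dongli} then yield $\mu e\le T(a+b)$. Using the $\fm$-equation, the left-hand side equals $\|T(a)+T(b+x)\|=\|T(a+x)+T(b)\|$. Lemma \ref{5} handles test elements of the form $x=sc_0$: it gives $T(b+sc_0)=T(b)+se$, hence equality $\mu+s$. The difficulty is a general $x$, where we only know that $T(b+x)\ge T(b)$ and $T(b+x)\ge T(x)$ by Lemma \ref{p}, together with $\|T(b+x)\|=\|T(b)+T(x)\|$.

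\emph{Main obstacle and role of injectivity.} The hard step is this lower bound for general $x$, and I expect bijectivity to be essential there. By Lemma \ref{4}, $\mu e=T(\mu c_0)$, so the goal is equivalent to $T(a+b)=T(\mu c_0)$; with injectivity this is the same as $a+b=\mu c_0$. My first attempt is to set $w=\mu e-T(a+b)\ge 0$ and, by surjectivity, pick $z\in A_+$ with $T(z)=w$. Then $\|T(a+b+z)\|=\|T(a+b)+w\|=\mu$. I would then run the argument of Lemma \ref{3} (which yields $\|T(a-b)\|=\|T(a)-T(b)\|$) together with Lemma \ref{5}, applied to the pairs $(a+z,b)$ and $(a,b+z)$, with the goal of showing $\|T(z)\|=0$. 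Injectivity would then give $z=0$ and hence $T(a+b)=\mu e$. The ingredients I would use are that $T(a+z)+T(b)$ and $T(a)+T(b+z)$ both dominate $\mu e$ by Lemma \ref{p}, while both have norm $\|T(a+b+z)\|=\mu$. This forces
\[
T(a+z)+T(b)=T(a)+T(b+z)=\mu e,
\]
so $T(a+z)=T(a)$ and $T(b+z)=T(b)$. Injectivity then gives $a+z=a$, so $z=0$, $w=0$, and $T(a+b)=\mu e$.
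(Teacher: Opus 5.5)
Your final argument is correct and is essentially the paper's proof. You pick $z$ with $T(z)=\mu e-T(a+b)$, use the $\fm$-equation to get $\|T(a)+T(b+z)\|=\|T(a+b+z)\|=\|T(a+b)+T(z)\|=\mu$, squeeze $T(a)+T(b+z)$ between $T(a)+T(b)=\mu e$ from below (order preservation) and $\mu e$ from above (norm $\mu$), and conclude $z=0$ by injectivity; the paper uses only the pair $(a,b+z)$, so your symmetric use of $(a+z,b)$ is harmless but redundant. The exploratory upper/lower-bound discussion via Lemma 2.6 of \cite{dongli} and the appeal to Lemmata \ref{3} and \ref{5} are unnecessary (Lemma \ref{3} would in any case require $T(e)=e$, which is not assumed here), so you can drop them.
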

\begin{proof}
As $0\le \|T(a+b)\|e-T(a+b)$, there exists $c\in A_+$ such that 
\begin{equation}\label{p23-1}
T(c)=\|T(a+b)\|e-T(a+b).
\end{equation}
Thus,  
$T(a+b)+T(c)=\|T(a+b)\|e$, hence $\|T(a+b)+T(c)\|=\|T(a+b)\|$. 
As $T$ satisfies the $\fm$-equation, we see that 
\begin{multline*}
\|T(a)+T(b+c)\|=\|T(a+b+c)\|
\\
=\|T(a+b)+T(c)\|
=\|T(a+b)\|=\|T(a)+T(b)\|.
\end{multline*}
Thus
\[
T(a)+T(b+c)\le \|T(a)+T(b)\|e=T(a)+T(b)
\]
by the assumption. 
Since $b\le b+c$, it follows by Lemma \ref{p} that 
\[
T(a)+T(b)\le T(a)+T(b+c).
\]
Thus, we see that 
\[
T(a)+T(b)=T(a)+T(b+c).
\]
As $T$ is an injection, we observe that $c=0$. 
By Lemma \ref{0}, we have $T(c)=0$. 
We conclude by \eqref{p23-1} that 
\[
T(a+b)=\|T(a)+T(b)\|e.
\]
\end{proof}
\begin{lemma}\label{p24}
Suppose further that $T$ is a bijection. Then $c_0$ is invertible.
\end{lemma}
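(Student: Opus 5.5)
The plan is to realize a multiple of $c_0$ as a sum $e+d$ with $d\in A_+$. Then $c_0$ dominates a positive multiple of $e$, and so $c_0$ is invertible. The tools are Lemma \ref{p23}, which turns a sum of images equal to a scalar multiple of $e$ into an image, Lemma \ref{4}, which identifies $T(\lambda c_0)=\lambda e$, and injectivity of $T$.

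First, set $s=\|T(e)\|$ and check that $s>0$. By Lemma \ref{0} we have $T(0)=0$. Since $e\neq 0$ and $T$ is injective, $T(e)\neq 0$, so $s>0$.

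Next, since $0\le T(e)\le se$, the element $se-T(e)$ lies in $B_+$. By surjectivity there is $d\in A_+$ with $T(d)=se-T(e)$. Then
\[
T(e)+T(d)=se \quad\text{and}\quad \|T(e)+T(d)\|=s,
\]
so the hypothesis of Lemma \ref{p23} holds with $a=e$ and $b=d$. Lemma \ref{p23} gives $T(e+d)=se$. On the other hand, Lemma \ref{4} gives $T(sc_0)=se$. Injectivity of $T$ then yields $e+d=sc_0$.

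Finally, since $d\ge 0$, we get $sc_0=e+d\ge e$, that is, $c_0\ge s^{-1}e$ with $s^{-1}>0$. Hence $\sigma(c_0)\subset[s^{-1},\infty)$, so $0\notin\sigma(c_0)$ and $c_0$ is invertible.

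There is no real obstacle once Lemma \ref{p23} is available. The only point needing care is that $s>0$, which is exactly where injectivity enters together with Lemma \ref{0}.
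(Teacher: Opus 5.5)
Your proof is correct and follows the paper's argument essentially step for step: you take $d$ with $T(d)=\|T(e)\|e-T(e)$, apply Lemmas \ref{p23} and \ref{4}, and use injectivity to obtain $e\le e+d=\|T(e)\|c_0$. Your explicit check that $s=\|T(e)\|>0$ is a detail the paper leaves implicit, and it is needed because Lemma \ref{4} is stated only for positive $\alpha$.
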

\begin{proof}
   As $0\le \|T(e)\|e-T(e)$, there exist $b\in A_+$ such that $T(b)=\|T(e)\|e-T(e)$. Then, we have
   $T(e)+T(b)=\|T(e)\|e$ and $\|T(e)+T(b)\|=\|T(e)\|$. Thus, 
   \[
   T(e)+T(b)=\|T(e)+T(b)\|e.
   \]
   By Lemmata \ref{4} and \ref{p23}, we observe that
   \[
   T(e+b)=\|T(e)+T(b)\|e=\|T(e)\|e=T(\|T(e)\|c_0).
   \]
   Since $T$ is injective, we obtain
   \[
   e\le e+b=\|T(e)\|c_0.
   \]
It follows that $c_0$ is invertible. 
\end{proof}
\begin{lemma}\label{10',11'}
Suppose further that $T$ is a bijection.
  Then $T^{-1}(B_{++})=A_{++}$.
\end{lemma}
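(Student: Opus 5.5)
The inclusion $A_{++}\subset T^{-1}(B_{++})$ is exactly Lemma \ref{++}, so only the reverse inclusion $T^{-1}(B_{++})\subset A_{++}$ needs proof. The plan is to use surjectivity to write $T(a)$ as $T$ of something dominating a positive multiple of $c_0$, then use injectivity to transfer this back to $a$, and finish with Lemma \ref{p24}.

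Let $a\in A_+$ with $T(a)\in B_{++}$. Since $T(a)$ is positive and invertible, its spectrum lies in $[t,\infty)$ for some $t>0$, so $T(a)-te\in B_+$. By surjectivity of $T$ choose $x\in A_+$ with $T(x)=T(a)-te$. Now apply Lemma \ref{5} with $\lambda=t$:
\[
T(x+tc_0)=T(x)+te=T(a).
\]
Since $T$ is injective, this gives $a=x+tc_0$. Because $x\ge 0$, it follows that $a\ge tc_0$.

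By Lemma \ref{p24}, $c_0$ is invertible, so $c_0\ge se$ for some $s>0$. Hence $a\ge tse$ with $ts>0$, which makes $a$ invertible, i.e. $a\in A_{++}$. This proves $T^{-1}(B_{++})\subset A_{++}$, and together with Lemma \ref{++} yields $T^{-1}(B_{++})=A_{++}$.

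There is no real obstacle once Lemmata \ref{5} and \ref{p24} are available. The essential inputs are the invertibility of $c_0$, which already required injectivity through Lemma \ref{p23}, and the use of injectivity to identify $a$ with $x+tc_0$. The only point to check carefully is that $\lambda=t$ is strictly positive, as Lemma \ref{5} requires, and this holds because $T(a)$ is invertible.
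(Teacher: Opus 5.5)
Your proof is correct and follows essentially the same route as the paper. You subtract a positive multiple of $e$ from an element of $B_{++}$, pull back by surjectivity, add the corresponding multiple of $c_0$ via Lemma \ref{5}, and conclude with the invertibility of $c_0$ from Lemma \ref{p24}. The only difference is cosmetic: the paper proves $B_{++}\subset T(A_{++})$ and leaves the final appeal to injectivity implicit, whereas you start from $a\in T^{-1}(B_{++})$ and invoke injectivity explicitly to obtain $a=x+tc_0$.
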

\begin{proof}
    We prove $T(A_{++})=B_{++}$. 
    Due to Lemma \ref{++} we already have $T(A_{++})\subset B_{++}$.

    To prove the reverse inclusion, we apply the invertibility of $c_0$, which is established by Lemma \ref{p24}. Let $b\in B_{++}$ be arbitrary. Since the minimum spectrum of $b$ is positive, there exists $s>0$ such that $se\le b$. Since $T(A_+)=B_+$, there exists $d\in A_+$ such that $T(d)=b-se$. Thus $b=T(d)+se$. By Lemma \ref{5}, we have $T(d+sc_0)=T(d)+se=b$. Since $d+sc_0\in A_{++}$ as $c_0$ is invertible, we conclude that $b\in T(A_{++})$. 
\end{proof}
\section{Proofs of the main Theorems and Corollary \ref{mean}}\label{s3}
We prove Theorems \ref{psdc} and \ref{ontopsdc} simultaneously, using the lemmata from the previous section.
\begin{proof}[Proof of Theorems \ref{psdc} and \ref{ontopsdc}]
If there is a Jordan $*$-isomorphism $J\colon A\to B$ with $T(\cdot)=T(e)^\frac12J(\cdot)T(e)^\frac12$, then $T$ 
satisfies the $\fm$-equation since $J$ is linear. 

We prove the converse statement. 
Let $c_0\in T^{-1}(\{e\})$. 
Then $c_0$ is an invertible element in $A$ by Lemma \ref{p24} in the case of Theorem \ref{psdc}, and by the assumption in the case of Theorem \ref{ontopsdc}. 
Put $T_0\colon A_{+}\to B_{+}$ be defined as $T_0(x)=T(c_0^\frac12xc_0^\frac12)$ for $x\in A_+$. 
As $c_0$ is invertible, we infer that $T_0\colon A_{+}\to B_{+}$ is a bijective (in the case of Theorem \ref{psdc}) (resp. surjective (in the case of Theorem \ref{ontopsdc})) $\fm$-map with $T_0(e)=e$. 
If $T$ is a surjection with $T^{-1}(B_{++})\subset A_{++}$ (in the case of Theorem \ref{ontopsdc}), then $T_0$ is also a surjection with $T_0^{-1}(B_{++})\subset A_{++}$. 
If $T$ is a bijection (in the case of Theorem \ref{psdc}), then $T_0$ is also a bijection. In this case, Lemma \ref{10',11'} ensures that $T_0^{-1}(B_{++})=A_{++}$. 
Hence, either in the case of Theorems \ref{psdc} or \ref{ontopsdc}, we have by Lemma \ref{e13,e14,21} that 
\[
\|T_0(a)+T_0(b)\|=\|T_0(a+b)\|=\|a+b\|
\]
for every pair $a, b\in A_{+}$. 
Applying Theorem 2.5 in \cite{dongli}, there exists a Jordan $*$-isomorphism $J\colon A\to B$ which extends $T_0$; $T_0=J$ on $A_+$. 
It is well known that a Jordan $*$-isomorphism preserves Jordan triple products, and we have
\[
T(a)=T_0(c_0^{-\frac12}ac_0^{-\frac12})=J(c_0^{-\frac12}ac_0^{-\frac12})=J(c_0^{-\frac12})J(a)J(c_0^{-\frac12})
\]
for every $a\in A_+$. 
In particular, we have $T(e)=J(c_0^{-\frac12})^2$. As a Jordan $*$-isomorphism preserves the positivity, we have $T(e)^\frac12=J(c_0^{-\frac12})$. 
Thus, we have the conclusion.
\end{proof}
Note that the proof of \cite[Theorem 2.5]{dongli} ultimately traces back to Kadison's theorem \cite[Theorem 2 and Corollary 5]{kadison}. Indeed, it relies on \cite[Theorem 13]{mol19}, which is a consequence of \cite[Theorem 9]{hamo}; the proof of the latter, in turn, relies heavily on \cite[Theorem 2 and Corollary 5]{kadison}.
\begin{proof}[Proof of Corollary \ref{mean}]
It is evident that (5) implines (1) and (1) implines (3).
The equivalence of (3) and (5) is due to Theorem \ref{psdc}. 
Hence (1), (3) and (5) are equivalent. 
We only need to prove that (4) implies (3). Then the rest are trivial. 

Suppose that (4) holds. We prove (3).  
We first show that $T(0)=0$. 
As $T$ is surjective, there exists $a_0\in A_+$ such that $T(a_0)=0$. Then we have
\[
0=\|T(a_0)\|=\left\|T\left(\frac{2a_0+0}{2}\right)\right\|
=\left\|\frac{T(2a_0)+T(0)}{2}\right\|\ge\left\|\frac{T(0)}{2}\right\|\ge0.
\]
It follows that $T(0)=0$. Let $c\in A_+$ be arbitrary. We have
\[
2\|T(c)\|=2\left\|T\left(\frac{2c+0}{2}\right)\right\|=2\left\|\frac{T(2c)+T(0)}{2}\right\|=\|T(2c)\|.
\]
Let $a,b\in A_+$ be arbitrary. 
Then, 
\[
\|T(a)+T(b)\|=2\left\|T\left(\frac{a+b}{2}\right)\right\|=\|T(a+b)\|.
\]
\end{proof}
\section{Proof of Corollary \ref{pc}}\label{s4}
Before proving Corollary \ref{pc}, we present Lemmata \ref{pp}, \ref{44}, \ref{33}, and \ref{55} for maps on positive definite cones. These are variants of Lemmata \ref{p}, \ref{4}, \ref{3}, and \ref{5}, where we dealt with maps on the positive semidefinite cone.

Let $S\colon A_{++}\to B_{++}$ be a surjection which satisfies the $\fm$ equation $\|S(a+b)\|=\|S(a)+S(b)\|$ for every pair $a,b\in A_{++}$. Suppose that $S(e)=e$. We have Lemmata \ref{pp} through \ref{55}.
\begin{lemma}\label{pp}
    The map $S$ preserves the order; i.e., $a\le b$ ensures that $S(a)\le S(b)$ for every pair $a,b\in A_{++}$.
\end{lemma}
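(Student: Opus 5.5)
I will mimic the proof of Lemma \ref{p}, replacing the auxiliary element $x\in A_+$ by an element of $A_{++}$. The one new difficulty is that $b-a$ may fail to be invertible, so it does not lie in the domain of $S$. I will get around this with a strict-inequality perturbation followed by a limiting argument.

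First assume $a\le b$ and that $c=b-a$ is invertible, so $c\in A_{++}$. For every $x\in A_{++}$ the $\fm$-equation gives
\[
\|S(b)+S(x)\|=\|S(a+c+x)\|=\|S(a+x)+S(c)\|\ge\|S(a+x)\|=\|S(a)+S(x)\|.
\]
The inequality in the middle holds because $S(a+x)$ and $S(c)$ are both positive, and for $0\le y,z$ one has $\|y+z\|\ge\|y\|$. Since $S$ is onto $B_{++}$, we therefore have $\|S(b)+y\|\ge\|S(a)+y\|$ for every $y\in B_{++}$. Lemma 2.6 of \cite{dongli}, which was used in Lemma \ref{p}, is formulated for $y$ ranging over $B_+$. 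I expect its proof to work equally well with $y$ ranging over $B_{++}$: the test elements it uses have the form $te-(\cdot)$ with $t$ large, and these are invertible. Failing that, I would apply the $B_+$ version directly with $y+\varepsilon e$ in place of $y$. The conclusion in this case is $S(a)\le S(b)$.

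The general case $a\le b$ is the main obstacle. Here I would first establish that $S(\alpha e)=\alpha e$ for rational $\alpha>0$, adapting the halving and induction argument of Lemma \ref{4}. That argument only involves the elements $2^{-n}e$ and $\frac{m}{2^n}e$, which all lie in $A_{++}$, and the test elements $x$ it uses can again be taken in $A_{++}$. Since $Lemma \ref{pp}$ must precede Lemma \ref{44}, an alternative is to avoid scalars entirely and argue as follows.

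Take $0<\varepsilon$ and note that $b+\varepsilon e-a\in A_{++}$. By the first case,
\[
S(a)\le S(b+\varepsilon e).
\]
It then suffices to show that $S(b+\varepsilon e)\to S(b)$ in norm as $\varepsilon\to0$, or at least that $\|S(b+\varepsilon e)+y\|\to\|S(b)+y\|$ for each $y$. For the second version, put $\delta=\varepsilon/2$ and write, for $x\in A_{++}$,
\[
\|S(b+\varepsilon e)+S(x)\|=\|S(b+x+\delta e)+S(\delta e)\|\le\|S(b)+S(x+\delta e)\|+\|S(\delta e)\|.
\]
The halving trick shows $\|S(\delta e)\|=\delta$, since $\|S(2^{-n}e)\|=2^{-n}$ by repeated use of the $\fm$-equation starting from $S(e)=e$, and monotonicity along the dyadic chain (from the first case) handles general $\delta$. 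This still leaves $S(x+\delta e)$ in place of $S(x)$, so I would iterate the estimate. Where this gets stuck, I would instead compare directly with $S(a)$:
\[
\|S(a)+S(x)\|\le\|S(b+\varepsilon e)+S(x)\|=\|S(b)+S(x+\varepsilon e)\|\le\|S(b)+S(x)+\varepsilon e\|,
\]
where the last step uses $S(x+\varepsilon e)\le S(x)+\varepsilon e$. This inequality is the analogue of \eqref{eq5-2} and is proved the same way via the test element $te-S(x)$. Letting $\varepsilon\to0$ gives $\|S(a)+y\|\le\|S(b)+y\|$ for all $y\in B_{++}$, and Lemma 2.6 of \cite{dongli} then yields $S(a)\le S(b)$.
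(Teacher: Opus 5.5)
Your final argument is correct and is essentially the paper's proof. You handle invertible $b-a$ exactly as in Lemma~\ref{p}, then combine $S(a)\le S(b+\varepsilon e)$ with the Lemma~\ref{5}-type bound $S(\cdot+\varepsilon e)\le S(\cdot)+\varepsilon e$ and let $\varepsilon\to0$. The paper applies this bound directly at $b$, giving $S(a)\le S(b)+2^{-n}e$, which is shorter than your detour through norms and a second use of Lemma~2.6 of \cite{dongli}.

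The one step to tighten is that bound itself. Proving it as in \eqref{eq5-2} needs only $\|S(\varepsilon e)\|\le\varepsilon$ plus the triangle inequality, and the halving trick gives this for $\varepsilon=2^{-n}$, so restrict to those $\varepsilon$. Your claim that dyadic monotonicity yields $\|S(\delta e)\|=\delta$ for arbitrary $\delta$ does not follow from halving alone, which only reaches powers of $2$; you would also need, e.g., $\|S((\delta+1)e)\|=\|S(\delta e)+e\|=\|S(\delta e)\|+1$.
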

\begin{proof}
A proof is similar to that of Lemma \ref{p} although we need some modification. We first observe that 
\begin{equation}\label{pp1}
S(2^{-n}e)=2^{-n}e
\end{equation}  
for every positive integer $n$. We can prove \eqref{pp1} in the same way as the proof of \eqref{eq3.2-0}, and it is omitted. 

We next prove that 
\begin{equation}\label{pp2}
S(b+2^{-n}e)=S(b)+2^{-n}e
\end{equation}
for every $b\in A_{++}$ and a positive integer $n$. 
A proof is similar to that of Lemma \ref{5}. Instead of applying Lemma \ref{4}, we use \eqref{pp1}. For every $x\in A_{++}$ and a positive integer $n$, applying the $\fm$-equation and \eqref{pp1} we have  
\begin{multline}\label{pp3}
    \|S(b+2^{-n}e)+S(x)\|
    =\|S(b+x)+S(2^{-n}e)\|
    \\
    =\|S(b+x)+2^{-n}e\|
    =\|S(b+x)\|+2^{-n} 
    \\
    =\|S(b)+S(x)\|+2^{-n}.
\end{multline}
Put $t=\|S(b)\|+1$. Then $0<te-S(b)$. As $S$ is surjective, there exists $x\in A_{++}$ with $S(x)=te-S(b)$. 
Substituting $S(x)=te-S(b)$ for \eqref{pp3}, we get 
\[
\|S(b+2^{-n}e)+te-S(b)\|=t+2^{-n}.
\]
Thus we have
\begin{equation}\label{pp4}
S(b+2^{-n}e)\le S(b)+2^{-n}e.
\end{equation}
Substituting $S(x)=se-S(b+2^{-n}e)$, where $s=\|S(b+2^{-n}e)\|+1$ for \eqref{pp3}, we obtain
\begin{equation}\label{pp5}
S(b)+2^{-n}e\le S(b+2^{-n}e)
\end{equation}
in a similar way as the proof of \eqref{eq5-3}. By \eqref{pp4} and \eqref{pp5} we have \eqref{pp2}. 

Let $a,b\in A_{++}$ be $a\le b$. We have $0<b+2^{-n}e-a$ for every positive integer $n$. In a similar way to the proof of Lemma \ref{p}, we have by \eqref{pp2} that
\[
S(a)\le S(b+2^{-n}e)=S(b)+2^{-n}e.
\]
Letting $n\to \infty$ we have the conclusion. 
\end{proof}
\begin{lemma}\label{44}
    For any positive $\alpha$, we have $S(\alpha e)=\alpha e$.
\end{lemma}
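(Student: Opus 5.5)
We prove $S(\alpha e)=\alpha e$ for every $\alpha>0$ in three stages, following the proof of Lemma \ref{4} with $c_0=e$. The new ingredients are the dyadic identities \eqref{pp1} and \eqref{pp2}, already obtained in the proof of Lemma \ref{pp}, together with Lemma \ref{pp} itself. We cannot use $S(0)$, because $0\notin A_{++}$.

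\emph{Step 1 (dyadic multiples).} By \eqref{pp1} we have $S(2^{-n}e)=2^{-n}e$ for every positive integer $n$. We claim that $S\left(\frac{m}{2^n}e\right)=\frac{m}{2^n}e$ for all positive integers $m,n$, and argue by induction on $m$ with $n$ fixed. The case $m=1$ is \eqref{pp1}. For the step, apply \eqref{pp2} with $b=\frac{k}{2^n}e\in A_{++}$:
\[
S\left(\frac{k+1}{2^n}e\right)=S\left(\frac{k}{2^n}e\right)+2^{-n}e=\frac{k+1}{2^n}e .
\]
This is simpler than the corresponding step in Lemma \ref{4}, because \eqref{pp2} already gives additivity along $e$ and no appeal to Lemma 2.6 in \cite{dongli} is needed.

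\emph{Step 2 (squeeze).} Let $\alpha>0$ be arbitrary. Choose positive dyadic rationals $\alpha_n\le\alpha\le\beta_n$ with $\alpha_n,\beta_n\to\alpha$; positivity of $\alpha_n$ is possible since $\alpha>0$. Then $\alpha_n e,\alpha e,\beta_n e\in A_{++}$ and $\alpha_n e\le\alpha e\le\beta_n e$. Lemma \ref{pp} and Step 1 give
\[
\alpha_n e\le S(\alpha e)\le\beta_n e .
\]
Letting $n\to\infty$ yields $S(\alpha e)=\alpha e$.

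The only point needing care is that \eqref{pp1} and \eqref{pp2} are genuinely available. They were established for every $b\in A_{++}$ inside the proof of Lemma \ref{pp}, so we may use them as proved. The surjectivity of $S$ onto $B_{++}$ guarantees the elements $x$ used there, so nothing further is needed. Hence there is no real obstacle: the argument is a direct transcription of Lemma \ref{4}, and in fact a shortcut of it.
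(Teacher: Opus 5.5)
Your proof is correct and follows the paper's intended route. The paper omits this proof as "similar to Lemma \ref{4}": dyadic values first, then a squeeze using the order-preservation of Lemma \ref{pp}. The one difference is that you obtain the induction step on $m$ directly from \eqref{pp2}, rather than re-running the norm argument with Lemma 2.6 of \cite{dongli}; this is a valid shortcut and it correctly avoids relying on Lemma \ref{55}.
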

\begin{lemma}\label{33}
    For every pair $a,b\in A_{++}$ with $0<a-b$, we have
    \[
    \|S(a-b)\|=\|S(a)-S(b)\|.
    \]
\end{lemma}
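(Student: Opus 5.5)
We adapt the proof of Lemma \ref{3} to the positive definite cone. Two things change. The auxiliary elements must now lie in $A_{++}$. Lemma \ref{4} is replaced by Lemma \ref{44}, and Lemma \ref{p} by Lemma \ref{pp}. Fix $a,b\in A_{++}$ with $0<a-b$, so that $S(a-b)$ is defined. By Lemma \ref{pp} we have $S(b)\le S(a)$, and therefore $\|S(a)-S(b)+te\|=\|S(a)-S(b)\|+t$ for every $t>0$.

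\emph{Inequality $\|S(a-b)\|\ge\|S(a)-S(b)\|$.} For every $x\in A_{++}$, the $\fm$-equation and the triangle inequality give
\[
\|S(a-b)\|+\|S(b)+S(x)\|=\|S(a-b)\|+\|S(b+x)\|\ge\|S(a-b)+S(b+x)\|=\|S(a)+S(x)\|.
\]
Put $t=\|S(b)\|+1$. Then $0<te-S(b)$. Since $S$ maps onto $B_{++}$, there is $x\in A_{++}$ with $S(x)=te-S(b)$. Substituting this $x$ yields $\|S(a-b)\|+t\ge\|S(a)-S(b)+te\|=\|S(a)-S(b)\|+t$.

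\emph{Inequality $\|S(a-b)\|\le\|S(a)-S(b)\|$.} Put $t=\|b\|+1$ and $x=te-b$. This $x$ is in $A_{++}$ because $\|b\|<t$, which is exactly why we take $t=\|b\|+1$. By Lemma \ref{44}, $S(b+x)=S(te)=te$. Hence
\[
\|S(a-b)\|+t=\|S(a-b)+S(b+x)\|=\|S(a+x)\|=\|S(a)+S(x)\|.
\]
Also $\|S(x)+S(b)\|=\|S(te)\|=t$, which gives $S(x)+S(b)\le te$, i.e. $S(x)\le te-S(b)$. It follows that
\[
\|S(a)+S(x)\|\le\|S(a)-S(b)+te\|=\|S(a)-S(b)\|+t,
\]
where the inequality is justified because $0\le S(a)+S(x)\le S(a)-S(b)+te$. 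Comparing the two displays gives the claim.

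The only delicate point is membership in $A_{++}$. The element $a-b$ is positive definite by hypothesis. The elements $x=te-b$ and $S(x)=te-S(b)$ are invertible because $t$ strictly exceeds the relevant norms. The cited Lemmata \ref{pp} and \ref{44} hold under the standing assumption $S(e)=e$. Lemma \ref{44} is proved exactly as Lemma \ref{4}, using \eqref{pp1} and Lemma \ref{pp}. So no extra obstacle arises, and the argument is a verbatim transfer of Lemma \ref{3}.
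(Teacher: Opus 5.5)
Your proof is correct and takes the route the paper intends. The paper omits the proof of Lemma \ref{33}, saying only that it is similar to Lemma \ref{3}, and you carry out that transfer: you replace Lemmata \ref{p} and \ref{4} by Lemmata \ref{pp} and \ref{44}, and you check that $a-b$, $x=te-b$, $b+x$ and $S(x)=te-S(b)$ lie in the positive definite cones (the unstated step $\|S(a-b)+te\|=\|S(a-b)\|+t$ holds because $S(a-b)\in B_{++}$ is positive).
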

\begin{lemma}\label{55}
For every positive real number $\lambda$ and $a\in A_{++}$, we have
\[
S(a+\lambda e)=S(a)+\lambda e.
\]
\end{lemma}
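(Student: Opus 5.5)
Write $S\colon A_{++}\to B_{++}$ as in the standing assumptions: a surjective $\fm$-map with $S(e)=e$. The argument follows the proof of Lemma \ref{5}. The one change is that we may now use Lemma \ref{44} in place of Lemma \ref{4}, so that $S(\lambda e)=\lambda e$. Fix $a\in A_{++}$ and $\lambda>0$. For every $x\in A_{++}$, the $\fm$-equation together with Lemma \ref{44} gives
\[
\|S(a+\lambda e)+S(x)\|=\|S(a+x)+S(\lambda e)\|=\|S(a+x)+\lambda e\|=\|S(a)+S(x)\|+\lambda .
\]
Here we use that $S(a+x)\ge 0$, so adding $\lambda e$ raises the norm by exactly $\lambda$. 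All the sums involved, $a+\lambda e$, $a+x$ and $a+\lambda e+x$, stay in $A_{++}$, so every term is defined.

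\textbf{First inequality.} Put $t=\|S(a)\|+1$. Then $te-S(a)$ is positive and invertible, i.e.\ it lies in $B_{++}$. By surjectivity of $S$ onto $B_{++}$ there is $x\in A_{++}$ with $S(x)=te-S(a)$. Substituting this $x$ gives
\[
\|S(a+\lambda e)+te-S(a)\|=t+\lambda .
\]
Since $S(a+\lambda e)+te-S(a)$ is self-adjoint with norm $t+\lambda$, it is bounded above by $(t+\lambda)e$. Rearranging, $S(a+\lambda e)\le S(a)+\lambda e$.

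\textbf{Reverse inequality.} Put $s=\|S(a+\lambda e)\|+1$. Choose $x\in A_{++}$ with $S(x)=se-S(a+\lambda e)$, which again lies in $B_{++}$. Substituting gives
\[
s=\|S(a)+se-S(a+\lambda e)\|+\lambda .
\]
Hence $S(a)+se-S(a+\lambda e)\le (s-\lambda)e$, that is, $S(a)+\lambda e\le S(a+\lambda e)$. Combining the two inequalities yields $S(a+\lambda e)=S(a)+\lambda e$.

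The only point needing care is the step from a norm identity to an order bound. For a self-adjoint element $y$, the norm bound $\|y\|\le r$ gives $-re\le y\le re$, and only the upper half is used. The elements $te-S(a)$ and $se-S(a+\lambda e)$ are strictly positive, so they lie in $B_{++}$ and surjectivity of $S$ applies to them. No order preservation or injectivity of $S$ is required, so there is no real obstacle beyond the appeal to Lemma \ref{44}.
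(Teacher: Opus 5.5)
Your proof is correct and follows the paper's intended argument. It is the proof of Lemma \ref{5}, already carried out in the paper for dyadic $\lambda=2^{-n}$ as \eqref{pp2} inside Lemma \ref{pp}, with Lemma \ref{44} in place of Lemma \ref{4}; as there, you substitute $S(x)=te-S(a)$ and $S(x)=se-S(a+\lambda e)$ to obtain the two order inequalities, and citing Lemma \ref{44} is not circular, since that lemma needs only Lemma \ref{pp}, which uses only the dyadic case \eqref{pp2}.
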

Proofs of Lemmata \ref{44}, \ref{33}, and \ref{55} are similar to those of Lemmata \ref{4}, \ref{3}, \ref{5} and are omitted.
\begin{proof}[Proof of Corollary \ref{pc}]
If there is a Jordan $*$-isomorphism $J\colon A\to B$ with $T(\cdot)=T(e)^\frac12J(\cdot)T(e)^\frac12$, then $T$ is additive since $J$ is linear. Hence $T$  
satisfies $\fm$-equation. 

We prove the converse statement.
Put $c_0=T^{-1}(e)$. Define $T_0\colon A_{++}\to B_{++}$ by $T_0(x)=T(c_0^\frac12xc_0^\frac12)$ for $x\in A_{++}$. 
As $c_0$ is invertible, it is easy to see that $T_0$ is a bijection and that $T_0(e)=e$. It is also plain that $T_0$ satisfies the $\fm$-equation. 
Extend $T_0$ to $\tilde{T_0}\colon A_+\to B_+$ by 
\[
\tilde{T_0}(a)=\lim_{n\to \infty}T_0\left(a+2^{-n}e\right)
\]
Then we have
\begin{itemize}
    \item[(1)] $\tilde{T_0}\colon A_+\to B_+$ is well defined,
    \item[(2)] $\tilde{T_0}=T_0$ on $A_{++}$,
    \item[(3)] $\tilde{T_0}$ is an injection,
    \item[(4)] $\tilde{T_0}$ is a surjection,
    \item[(5)] $\|\tilde{T_0}(a+b)\|=\|\tilde{T_0}(a)+\tilde{T_0}(b)\|$ holds for every pair $a,b\in A_+$,
    \item[(6)] $\tilde{T_0}(e)=e$.
\end{itemize}

Proof of (1). We prove that the sequence $\{T_0\left(a+2^{-n}e\right)\}_{n=1}^\infty$ is a Cauchy sequence. Indeed, for positive integers $n<m$, we have by Lemmata \ref{44} and \ref{33} that 
\begin{multline*}
    \|T_0\left(a+2^{-n}e\right)-T_0\left(a+2^{-m}e\right)\|
    \\
    = \|T_0\left(2^{-n}e-2^{-m}e\right)\|
    =
    \|\left(2^{-n}-2^{-m}\right)e\| 
    =2^{-n}-2^{-m}.
\end{multline*}
Thus 
$\{T_0\left(a+2^{-n}e\right)\}_{n=1}^\infty$ 
is a Cauchy sequence. 
Since $B_+$ is complete with $\|\cdot\|$, $\lim_{n\to \infty}T_0\left(a+2^{-n}e\right)$ exists. Hence, the map $\tilde{T_0}\colon A_+\to B_+$ is well defined. 

Proof of (2). Let $a\in A_{++}$ be arbitrary. By Lemmata \ref{44} and \ref{33}, we have  
\[
\|T_0\left(a+2^{-n}e\right)-T_0(a)\|
=
\|T_0\left(2^{-n}e\right)\|=2^{-n}.
\]
Hence, 
\[
\tilde{T_0}(a)=\lim_{n\to \infty}T_0\left(a+2^{-n}e\right)=T_0(a).
\]

Proof of (3) Let $a,b\in A_+$ be $a\ne b$. We prove $\tilde{T_0}(a)\ne \tilde{T_0}(b)$. For every positive integer $n$, $a+2^{-n}e\ne b+2^{-n}e$. 
By Lemma \ref{55} we have
\[
T_0\left(a+2^{-n}e\right)=T_0\left(a+2^{-(n+1)}e+2^{-(n+1)}e\right)=T_0\left(a+2^{-(n+1)}e\right)+2^{-(n+1)}e
\]
and 
\[
T_0\left(b+2^{-n}e\right)=T_0\left(b+2^{-(n+1)}e+2^{-(n+1)}e\right)=T_0\left(b+2^{-(n+1)}e\right)+2^{-(n+1)}e.
\]
Hence,
\[
T_0\left(a+2^{-n}e\right)-T_0\left(b+2^{-n}e\right)
=
T_0\left(a+2^{-(n+1)}e\right)-T_0\left(b+2^{-(n+1)}e\right)
\]
for every $n$. Thus,
\[
T_0\left(a+2^{-1}e\right)-T_0\left(b+2^{-1}e\right)
=
T_0\left(a+2^{-n}e\right)-T_0\left(b+2^{-n}e\right)
\]
for every positive integer $n$. It follows that 
\begin{multline*}
T_0\left(a+2^{-1}e\right)-T_0\left(b+2^{-1}e\right)
\\
=
\lim_{n\to \infty}\left(T_0\left(a+2^{-n}e\right)-T\left(b+2^{-n}e\right)\right)=\tilde{T_0}(a)-\tilde{T_0}(b).
\end{multline*}
We have $\tilde{T_0}(a)\ne \tilde{T_0}(b)$ as $T_0$ is injective and 
$T_0\left(a+2^{-1}e\right)\ne T_0\left(b+2^{-1}e\right)$.

Proof of (4). Let $b\in B_{+}$ be arbitrary. 
By the bijectivity of $T_0$, there is a unique $a_n\in A_{++}$ such that $T_0(a_n)=b+2^{-n}e$ for each positive integer $n$. By Lemma \ref{55}, we have 
\begin{multline*}
    T_0\left(a_{n+1}+2^{-(n+1)}e\right)=T_0(a_{n+1})+2^{-(n+1)}e
    \\
    =b+2^{-(n+1)}e+2^{-(n+1)}e=
    b+2^{-n}e=T_0(a_n).
\end{multline*}
By the injectivity of $T_0$, we get
\[
a_{n+1}+2^{-(n+1)}e=a_n
\]
for every positive integer $n$. It follows that 
\[
a_n+\left(2^{-1}-2^{-n}\right)e=a_1,
\]
hence
\[
0\le \lim_{n\to \infty}a_n=a_1-2^{-1}e,
\]
hence, $a_1-2^{-1}e\in A_+$. 
It follows that
\begin{multline*}
\tilde{T_0}\left(a_1-2^{-1} e\right)=\lim_{n\to \infty}T_0\left(a_1-2^{-1}e+2^{-n}e\right)
\\
=\lim_{n\to \infty}T_0(a_n)=\lim_{n\to \infty}\left(b+2^{-n}e\right)=b.
\end{multline*}
As $b$ is arbitrary, we have that $\tilde{T_0}$ is surjective.

Proof of (5). Let $a,b\in A_+$ be arbitrary. 
For every positive integer $n$, we have 
$a+2^{-(n+1)}e, b+2^{-(n+1)}e, a+b+2^{-n}e\in A_{++}$. Thus we have
\[
\|T_0\left(a+2^{-(n+1)}e\right)+T_0\left(b+2^{-(n+1)}e\right)\|=\|T_0\left(a+b+2^{-n}e\right)\|.
\]
Letting $n\to \infty$, we get
\[
\|\tilde{T_0}(a)+\tilde{T_0}(b)\|=\|\tilde{T_0}(a+b)\|.
\]

(6) is easy. By (2) we have $\tilde{T_0}(e)=T_0(e)=e$. 

Applying Theorem \ref{psdc} for $\tilde{T_0}$, there is a Jordan $*$-isomorphism $J\colon A\to B$ such that $\tilde{T_0}=J$ on $A_+$. Hence, $T_0=J$ on $A_{++}$.
As in the same way as the proof of Theorems \ref{psdc} and \ref{ontopsdc},  
we conclude that $T(a)=T(e)^\frac12 J(a)T(e)^\frac12$ for every $a\in A_{++}$.
\end{proof}
\section{Final comments}\label{frc}
For a surjection between the positive definite cones of unital $C*$-algebras, a similar result as 
Corollary \ref{mean} is possible. In fact, we have
\begin{corollary}\label{final}
Suppose that $T\colon A_{++}\to B_{++}$ is a bijection. 
Then the following are equivalent.
\begin{itemize}
    \item[(1)] $T$ satisfies the Cauchy equation, i.e., $T(a+b)=T(a)+T(b)$ for every pair $a,b\in A_{++}$,
    \item[(2)] $T$ satisfies the Jensen equation, i.e., 
    $T\left(\frac{a+b}{2}\right)=\frac{T(a)+T(b)}{2}$ for every pair $a,b\in A_{++}$,
    \item[(3)]$T$ satisfies the $\fm$-equation, i.e., $\|T(a+b)\|=\|T(a)+T(b)\|$ for every pair $a,b\in A_{++}$, 
    \item[(5)] there is a Jordan $*$-isomorphism $J\colon A\to B$ such that 
    $T(a)=T(e)^\frac12 J(a)T(e)^\frac12$ 
    for every $a\in A_{++}$.
\end{itemize}
\end{corollary}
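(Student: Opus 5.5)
The plan is to run the same cycle of implications as in the proof of Corollary \ref{mean}, with Corollary \ref{pc} taking the place of Theorem \ref{psdc}.

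\textbf{The easy implications.} (5) implies (1), because $J$ is linear and so $a\mapsto T(e)^{\frac12}J(a)T(e)^{\frac12}$ is additive on $A_{++}$. (1) trivially implies (3). Corollary \ref{pc} gives (3) implies (5). This makes (1), (3) and (5) equivalent, and (1) implies (2) is immediate since $A_{++}$ is closed under $a\mapsto a/2$. It therefore remains to show that (2) implies one of (1), (3), (5).

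\textbf{Reducing the Jensen equation.} The natural route is (2) implies (3). I would try to prove the homogeneity $T(2c)=2T(c)$ for all $c\in A_{++}$. In the semidefinite case this came from putting $b=0$, but $0\notin A_{++}$, and this is the main obstacle. Granting homogeneity, the Jensen equation gives
\[
T(a+b)=2T\left(\frac{a+b}{2}\right)=T(a)+T(b).
\]
That is (1) directly, and (3) follows.

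\textbf{Handling the obstacle.} Without homogeneity I would first try to derive additivity from the Jensen equation by a standard translation trick. Fix $d\in A_{++}$ and set
\[
U(x)=T(x+d)-T(d),\qquad x\in A_{+}.
\]
This is well defined on all of $A_+$ because $x+d\in A_{++}$. The map $U$ still satisfies the Jensen equation, and now $U(0)=0$. Jensen with $0$ gives $U(x/2)=U(x)/2$, hence $U$ is additive on $A_+$.

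It follows that $T(a+b+d)+T(d)=T(a+d)+T(b+d)$. Choosing $d=\varepsilon e$ and using the Jensen equation again, one hopes to let $\varepsilon\to 0$. A cleaner way to finish is this: from additivity of $U$ we get $T(a)=U(a-d)+T(d)$ whenever $a\ge d$. Hence $T$ is the restriction of an additive map plus a constant on $d+A_+$. Using the midpoint relation $T(\frac{a+b}{2})$ with $a$ and $b$ ranging over translates, one shows the constant part vanishes; concretely, $T(2d)=2T(d)$ follows from Jensen applied to $d/2$ and $3d/2$ together with additivity of $U$ at the appropriate base point.

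Alternatively, one can bypass all this by citing Moln\'ar's \cite[Proposition 1]{mol17}. It characterizes bijective Jensen maps between positive definite cones in exactly the form (5), which gives (2) implies (5) at once. I would take this citation as the primary argument, with the translation argument as a self-contained backup.
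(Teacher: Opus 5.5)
Your primary argument is exactly the paper's proof, and it is correct. It uses the trivial implications (5)$\Rightarrow$(1)$\Rightarrow$(2),(3), then Corollary~\ref{pc} for (3)$\Rightarrow$(5), and Moln\'ar's \cite[Proposition 1]{mol17} for (2)$\Rightarrow$(5).

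The translation argument you offer as a self-contained backup has a genuine gap at its last step. Everything up to the additivity of $U(x)=T(x+d)-T(d)$ on $A_+$ is fine. From it one can indeed deduce $T=L+c$ on $A_{++}$, with $L\colon A_{SA}\to B_{SA}$ additive and $c\in B_{SA}$ a constant.

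The constant, however, cannot be removed using the Jensen equation and translation identities alone. Jensen at $d/2,3d/2$ together with additivity of $U$ at base point $d/2$ only turns $T(2d)=2T(d)$ into the equivalent statement $T(d)=2T(d/2)$, so that step is circular. In fact, the map $T(x)=x+e$ on $A_{++}$ is an injective Jensen map that satisfies all these identities, yet $T(2d)=2d+e\neq 2T(d)$. Letting $\varepsilon\to0$ with $d=\varepsilon e$ would require continuity of $T$, which you do not have.

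The missing ingredient is surjectivity, which your sketch never uses. One way to bring it in is as follows.
\begin{itemize}
\item[(i)] For $a\in A_{++}$ and rational $r>0$ you have $rL(a)+c=T(ra)\in B_{++}$. Letting $r\to0^+$ gives $c\in B_+$, and letting $r\to\infty$ gives $L(A_{++})\subset B_+$.
\item[(ii)] By additivity and closedness of $B_+$, this yields $L(A_+)\subset B_+$.
\item[(iii)] The inverse $T^{-1}$ is again a bijective Jensen map, so the same reasoning gives $T^{-1}=M+c'$ with $c'\in A_+$.
\item[(iv)] Compare $T(T^{-1}(rb))=rb$ for two rational values of $r$. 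This yields $LM=\mathrm{id}$ and $c=-L(c')\le 0$.
\end{itemize}
Hence $c=0$, so $T$ is additive, and Corollary~\ref{pc} closes the cycle.
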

The equivalence of (1), (3) and (5) is followed by Corollary \ref{pc}. Moln\'ar \cite[Proposition 1]{mol17} proved that (2) and (5) are equivalent to each other. On the other hand, we do not know if 
\begin{itemize}
    \item[(4)] $T$ satisfies the equation $\left\|T\left(\frac{a+b}{2}\right)\right\|=\left\|\frac{T(a)+T(b)}{2}\right\|$
for every pair $a,b\in A_{++}$
\end{itemize}
is equivalent to (5) or not.
\subsection*{Declaration}
The authors used ChatGPT (OpenAI) to assist with English-language editing. After using this tool, the authors reviewed and edited the content as needed and take full responsibility for the 
content of the publication.
\subsection*{Data Availability Statement} Data sharing is not applicable to this article as
no datasets were generated or analyzed during the current study.
\subsection*{Conflict of interest} 
On behalf of all authors, the corresponding author states that there is no conflict of interest.

\end{document}